\documentclass[11pt]{article} \usepackage{amsmath,amssymb,amsthm} \usepackage[unicode,breaklinks=true,colorlinks=true,linkcolor=blue,urlcolor=blue,citecolor=blue]{hyperref} \usepackage[dvipsnames]{xcolor} \usepackage[normalem]{ulem}  \usepackage{tikz} \usetikzlibrary{patterns} \usepackage{soul} \usepackage{enumerate} \usepackage[top=1in, bottom=1in, left=1.25in, right=1.25in, marginparwidth=1.1in, marginparsep=0.05in]{geometry} \numberwithin{equation}{section} \newtheorem{theorem}{Theorem}[section]  \newtheorem{lemma}[theorem]{Lemma} \newtheorem{definition}[theorem]{Definition}     \theoremstyle{remark} \newtheorem{remark}[theorem]{Remark}          \newcommand{\bket}[1]{\left\{ #1 \right\}} \def\llobet#1{} \newcommand{\norm}[1]{\| #1 \|} \def\labell#1{}  \newcommand{\bka}[1]{\langle #1 \rangle}  \newcommand{\al}{\alpha} \newcommand{\be}{\beta} \newcommand{\de}{\delta} \newcommand{\e}{\epsilon}    \newcommand{\la}{\lambda}  \newcommand{\Om}{{\Omega}}  \newcommand{\si}{\sigma} \newcommand{\td}{\tilde}   \newcommand{\MM}{M^{2,2}_{\mathcal C}} \newcommand{\MC}{\mathring M^{2,2}_{\mathcal C}}       \newcommand{\R}{{\mathbb R }} \newcommand{\N}{{\mathbb N}} \newcommand{\Z}{{\mathbb Z}}  \newcommand{\pd}{{\partial}} \newcommand{\nb}{{\nabla}} \newcommand{\lec}{\lesssim}  \newcommand{\I}{\infty}     \newcommand{\pv}{\mathop{\rm p.v.}}   \newcommand{\donothing}[1]{{}}  \newcommand{\EQ}[1]{\begin{equation}\begin{split} #1 \end{split}\end{equation}} \newcommand{\EQN}[1]{\begin{equation*}\begin{split} #1 \end{split}\end{equation*}} \DeclareMathOperator*{\esssup}{ess\,sup}  \def\indeq{\quad{}}  \def\colb{\color{black}} \definecolor{colorgggg}{rgb}{0.1,0.5,0.3} \definecolor{colorllll}{rgb}{0.0,0.7,0.0} \definecolor{colorhhhh}{rgb}{0,0.8,0.5} \definecolor{colorpppp}{rgb}{0.3,0.0,0.7}     \def\comma{ {\rm ,\qquad{}} }                                    \makeatletter \newcommand{\xRightarrow}[2][]{\ext@arrow 0359\Rightarrowfill@{#1}{#2}} \makeatother \newcommand{\loc}{\mathrm{loc}}  \newcommand{\uloc}{\mathrm{uloc}} \usepackage{accents} \newcommand*{\dt}[1] {\accentset{\mbox{\large\bfseries .}}{#1}}   \begin{document}   \title{Existence of global weak solutions to the Navier-Stokes equations in weighted spaces}    \author{Zachary Bradshaw, Igor Kukavica, and Tai-Peng Tsai}    \date{\today}   \maketitle  \small \noindent Department of Mathematics, University of Arkansas, Fayetteville, AR 72701\\e-mail: zb002@uark.edu\\ \noindent Department of Mathematics, University of Southern California, Los Angeles, CA 90089\\e-mail: kukavica@usc.edu\\ \noindent Department of Mathematics, University of British Columbia, Vancouver, BC V6T 1Z2\\e-mail: ttsai@math.ubc.ca\\ \medskip \normalsize \begin{abstract}  We obtain a global existence result for the three-dimensional Navier-Stokes equations with a large class of data  allowing growth at spatial infinity.  Namely, we show the global existence of suitable weak solutions when the initial data belongs to the weighted space $\mathring M^{2,2}_{\mathcal C}$  introduced in \cite{BK1}. This class  is strictly larger than currently available spaces of initial data for global existence and  includes all locally square integrable discretely self-similar data. We also identify a sub-class of data for which solutions exhibit eventual regularity on a parabolic set in space-time. \end{abstract} \section{Introduction}   The Navier-Stokes equations describe the evolution of the velocity~$u$ and the pressure~$p$, solving   \EQ{\label{eq.NSE}    &\partial_t u-\Delta u +u\cdot\nabla u+\nabla p = 0,    \\& \nabla \cdot u=0,    } in the sense of distributions~\cite{CF,G,RRS,LR,LR2,T,Tsai}. The system \eqref{eq.NSE} is set on $\R^3\times (0,T)$ where $T>0$ can be~$+\I$.  Also, $u$ evolves from a prescribed, divergence-free initial data $u_0\colon\R^3\to \R^3$.    \par \par In  \cite{leray}, \llobet{8ThswELzXU3X7Ebd1KdZ7v1rN3GiirRXGKWK099ovBM0FDJCvkopYNQ2aN94Z7k0UnUKamE3OjU8DFYFFokbSI2J9V9gVlM8ALWThDPnPu3EL7HPD2VDaZTggzcCCmbvc70qqPcC9mt60ogcrTiA3} J.~Leray constructed a global-in-time weak solution to \eqref{eq.NSE} on $\R^3\times (0,\infty)$ for any divergence-free vector field $u_0\in L^2(\R^3)$.  In \cite{LR}, Lemari\'e-Rieusset introduced a local analogue of a Leray weak solution evolving from uniformly locally square integrable \llobet{HEjwTK8ymKeuJMc4q6dVz200XnYUtLR9GYjPXvFOVr6W1zUK1WbPToaWJJuKnxBLnd0ftDEbMmj4loHYyhZyMjM91zQS4p7z8eKa9h0JrbacekcirexG0z4n3xz0QOWSvFj3jLhWXUIU21iIAwJtI} data  $u_0\in L^2_{\uloc}$. Here, $L^q_{\uloc}$, for $1\le q \le \infty$,  is the space of \llobet{3RbWa90I7rzAIqI3UElUJG7tLtUXzw4KQNETvXzqWaujEMenYlNIzLGxgB3AuJ86VS6RcPJ8OXWw8imtcKZEzHop84G1gSAs0PCowMI2fLKTdD60ynHg7lkNFjJLqOoQvfkfZBNG3o1DgCn9hyUh5} functions on $\R^3$ such that \[ \norm{u_0}_{L^q_{\uloc}} =\sup_{x \in\R^3} \norm{u_0}_{L^q(B(x,1))}<\infty. \] We also denote \[ E^q = \mathop{\rm Cl}\nolimits_{L^q_{\uloc}}({C_0^\I(\R^3)}) \] the closure of $C_0^\I(\R^3)$ in the $L^q_{\uloc}$-norm. We do not define solutions exactly as in \cite{LR}  but instead recall a definition from \cite{BK1}.     For a cube $Q$ in $\R^3$, we denote by $Q^*$ and $Q^{**}$ concentric cubes \llobet{VSP5z61qvQwceUdVJJsBvXDG4ELHQHIaPTbMTrsLsmtXGyOB7p2Os43USbq5ik4Lin769OTkUxmpI8uGYnfBKbYI9AQzCFw3h0geJftZZKU74rYleajmkmZJdiTGHOOaSt1NnlB7Y7h0yoWJryrVr} with side-lengths $4|Q|^{1/3}/3$, and $5|Q|^{1/3}/3$, respectively. Thus $Q \subset Q^* \subset Q^{**}$. This is a slightly different definition from \cite{BK1}. \par \begin{definition}[Local energy solutions]\label{def:localenergy} A vector field $u\in L^2_{\loc}(\R^3\times [0,T))$, where $0<T<\infty$, is a local energy solution to \eqref{eq.NSE} with divergence-free initial data $u_0\in L^2_{\loc}(\R^3)$   if the following conditions hold: \begin{enumerate} \item $u\in \bigcap_{R>0} L^{\I}(0,T;L^2(B_R(0)))$ and $\nabla u \in L^2_\loc(\R^3\times [0,T])$, \item for some $p\in L^{3/2}_{\loc}(\R^3\times (0,T))$, the pair $(u,p)$ is a distributional solution to \eqref{eq.NSE}, \item for all compact subsets $K$ of $\R^3$ we have $u(t)\to u_0$ in $L^2(K)$ \llobet{TzHO82S7oubQAWx9dz2XYWBe5Kf3ALsUFvqgtM2O2IdimrjZ7RN284KGYtrVaWW4nTZXVbRVoQ77hVLX6K2kqFWFmaZnsF9Chp8KxrscSGPiStVXBJ3xZcD5IP4Fu9LcdTR2VwbcLDlGK1ro3EEyq} as $t\to 0^+$, \item $u$ is suitable in the sense of Caffarelli-Kohn-Nirenberg, i.e.,  for all non-negative $\phi\in C_0^\infty ({\mathbb R}^{3}\times (0,T))$,  we have  the local energy inequality \EQ{\label{CKN-LEI} & 2\iint |\nabla u|^2\phi\,dx\,dt  \leq  \iint |u|^2(\partial_t \phi + \Delta\phi )\,dx\,dt +\iint (|u|^2+2p)(u\cdot \nabla\phi)\,dx\,dt, } \item the function $t\mapsto \int u(x,t)\cdot w(x)\,dx$ is continuous on $[0,T)$ \llobet{EAzw6sKeEg2sFfjzMtrZ9kbdxNw66cxftlzDGZhxQAWQKkSXjqmmrEpNuG6Pyloq8hHlSfMaLXm5RzEXW4Y1Bqib3UOhYw95h6f6o8kw6frZwg6fIyXPnae1TQJMt2TTfWWfjJrXilpYGrUlQ4uM7} for any compactly supported $w\in L^2(\R^3)$, \item for every cube $Q\subset \R^3$, there exists $p_{Q}(t)\in L^{3/2}(0,T)$ such that for $x\in Q^*$ and $0<t<T$,  \EQN{ p(x,t)-p_Q(t) &=   - \frac 13  |u(x)|^2 +\pv\int_{ y\in Q^{**}}  K_{ij}(x-y)   (u_i(y,s)u_j(y,s))\,dy    \\&\indeq  + \int_{ y\notin Q^{**}}  ( K_{ij}(x-y) - K_{ij}(x_Q - y)  )  (u_i(y,s)u_j(y,s)\,dy, } where $x_Q$ is the center of $Q$ and $K_{ij}(y)=\partial_i\partial_j (4\pi |y|)^{-1}$.  \end{enumerate} We say that $u$ is a local energy solution on $\R^3\times [0,\I)$ if it is a local energy solution on $\R^3\times [0,T)$ for all $T<\I$. \end{definition} \par \par In comparison with the definitions in  \cite{LR,KiSe,RS,JiaSverak-minimal,BT8}, we   do not require $u_0\in L^2_\uloc$ or \begin{equation}\notag \esssup_{0\leq t<R^2}\,\sup_{x_0\in \R^3}\, \int_{B_R(x_0 )}|u(x,t)|^2\,dx + \sup_{x_0\in \R^3}\int_0^{R^2}\int_{B_R(x_0)} |\nabla u(x,t)|^2\,dx \,dt<\infty , \end{equation} for any $R>0$. For other low-regularity solution classes cf.~\cite{CW, KV}. Suitability is an \llobet{Dsp0rVg3gIEmQOzTFh9LAKO8csQu6mh25r8WqRIDZWgSYkWDulL8GptZW10GdSYFUXLzyQZhVZMn9amP9aEWzkau06dZghMym3RjfdePGln8s7xHYCIV9HwKa6vEjH5J8Ipr7NkCxWR84TWnqs0fs} important property as it allows the application of the Caffarelli-Kohn-Nirenberg \cite{CKN} and related regularity criteria and can be used to establish certain turbulent dynamics \cite{DaGr}. \par Local energy solutions are known to exist locally in time for initial data in $L^2_\uloc(\Om)$ where $\Om$ is $\R^3$ \cite{LR,LR2,KiSe,KwTs} or $\R^3_+$ \cite{MaMiPr}.  The global existence usually requires some type of decay or a structural assumption on the data~\cite{LR,KiSe,KwTs,BT5,CW,BT8}. In particular, in \cite{BT8} two of the authors constructed global solutions for initial data satisfying   \[    \lim_{R\to \I} \sup_{x_0\in \R^3} \frac 1 {R^2} \int_{B_R(x_0)} |u_0|^2\,dx =0.   \] In two dimensions, Basson constructed local solutions for a new class of (specified below) and global solutions  for non-decaying data in the uniform space $L^2_\uloc$ \cite{Basson}. \par Recently, several  existence results have appeared which allow for initial data in $L^2_\loc\setminus L^2_\uloc$.  In \cite{BK1}, two of the authors constructed local in time solutions for initial data  in the space $\mathring M^{2,2}_\mathcal C$ defined below. This construction is related to the idea of Basson~\cite{Basson}, but is given in three dimensions as opposed to two, in a different  setting, and uses a different approach to the pressure and estimates. In \cite{FL}, Fern\'andez-Dalgo and Lemari\'e-Rieusset constructed global solutions assuming the initial data $u_0$ satisfies  \[ \int_{\R^3} \frac {|u_0(x)|^2} {(1+|x|)^2} \,dx <\I.  \] Fern\'andez-Dalgo and Lemari\'e-Rieusset also included a new construction of discretely \llobet{iPqGgsId1fs53AT71qRIczPX77Si23GirL9MQZ4FpigdruNYth1K4MZilvrRk6B4W5B8Id3Xq9nhxEN4P6ipZla2UQQx8mdag7rVD3zdDrhBvkLDJotKyV5IrmyJR5etxS1cvEsYxGzj2TrfSRmyZ} self-similar (DSS) solutions in \cite{FL} for any DSS data in $L^2_\loc$ based on their global existence result (this gives a new proof of results in \cite{CW,BT5}; cf.~also~\cite{JiaSverak,BT1}) and is therefore of interest beyond the existence problem. \par The purpose of this paper is to construct global solutions for the class of initial data  $\mathring M^{2,2}_\mathcal C$ introduced in \cite{BK1}, which strictly includes the initial data from \cite{BT8,FL}. For $n\in{\mathbb N}_0$, let    \EQ{       Q_{n}=\{x: |x_i|< 2^{n+1}\hbox{~for~}i=1,2,3\}.          \label{EQL5TS2kIQjZKb9QUx2Ui5Aflw1SLDGIuWUdCPjywVVM2ct8cmgOBS7dQViXR8Fbta1mtEFjTO0kowcK2d6MZiW8PrKPI1sXWJNBcREVY4H5QQGHbplPbwdTxpOI5OQZAKyiix7QeyYI91Ea16rKXKL2ifQXQPdPNL605}   } Denote  \[ S_0=\overline{Q_0},     \quad S_n=\overline{Q_{n}\setminus Q_{n-1}} \quad \text{for} \quad n\in \mathbb N. \]    Partition $S_0$ into $64$ cubes of side-length $1$ and $S_n$ into $56$ cubes of side-length $2^n$.  Let $\mathcal C$ be the collection of these cubes.  Note that the number of cubes in  $\bigcup_{i=0}^{n-1} S_i$ grows {linearly} in~$n$. \par The main features of the collection $\mathcal C$ are the following:    \begin{enumerate}[(i)]      \item The \labell{EJiHcK} side-length \labell{rBs2qGtQbaqedOjLixjGiNWr1PbYSZeSxxFinaK9EkiCHV2a13f7G3G3oDKK0ibKVy453E2nFQS8Hnqg0E32ADddEVnmJ7HBc1t2K2ihCzZuy9kpsHn8KouARkvsHKPy8YodOOqBihF1Z3CvUFhmjgBmuZq7ggWL} of a cube       is proportional \labell{g5dQB1kpFxkk35GFodk00YD13qIqqbLwyQCcyZRwHAfp79oimtCc5CV8cEuwUw7k8Q7nCqWkMgYrtVRIySMtZUGCHXV9mr9GHZol0VEeIjQvwgw17pDhXJSFUcYbqUgnGV8IFWbS1GXaz0ZTt81w7EnIhFF72v2P} to the distance of its center   \labell{kWOXlkrw6IPu5679vcW1f6z99lM2LI1Y6Naaxfl18gT0gDptVlCN4jfGSbCro5Dv78CxaukYiUIWWyYDRw8z7KjPx7ChC7zJvb1b0rFd7nMxk091wHvy4u5vLLsJ8NmAkWtxuf4P5NwP23b06sFNQ6xgDhuRGbK7}    from the origin.      \item Adjacent cubes have comparable volumes.      \item If $|Q'|<|Q|$, then the distance between the centers of $Q$ and $Q'$ is proportional to $|Q|^{1/3}$.      \item The number of cubes $Q'$ satisfying $|Q'|<|Q|$ is bounded above by a constant multiple of             $ \log|Q|$.    \end{enumerate} \par For convenience we also refer to the \labell{j2O4gy4p4BLtop3h2kfyI9wO4AaEWb36YyHYiI1S3COJ7aN1r0sQOrCAC4vL7yrCGkIRlNuGbOuuk1awLDK2zlKa40hyJnDV4iFxsqO001rqCeOAO2es7DRaCpUG54F2i97xSQrcbPZ6K8Kudn9e6SYo396Fr8LU} collection of cubes in $\mathcal C$ contained in $S_n$ as $S_n$ and accordingly write $Q\in S_n$  if $Q$ is part of the collection $S_n$. \par Our initial data space is an analogue of $L^2_\uloc$ but adapted to the cover $\mathcal C$ and weighted. \par    \begin{definition}\label{def.spaces}  Let $p\in[1,\infty)$ and $q\ge0$. We have $f\in M^{p,q}_{\mathcal C}$ if       \[      \|f\|_{M^{p,q}_{\mathcal C}}^p := \sup_{Q\in \mathcal C} \frac 1 {|Q|^{q/3}}\int_{Q} |f(x)|^p\,dx<\I.     \] Let $\mathring M^{p,q}_{\mathcal C}$ be the set of $f\in M^{p,q}_{\mathcal C}$ \llobet{o4Lm5DmqNiZdacgGQ0KRwQKGXg9o8v8wmBfUutCOcKczzkx4UfhuAa8pYzWVq9Sp6CmAcZLMxceBXDwugsjWuiiGlvJDb08hBOVC1pni64TTqO7D63tH0FCVTZupPlA9aIoN2sf1Bw31ggLFoDO0Mx18ooheEdKgZBCqdqpasaHFhxBrEaRgAuI5dqmWWBMuHfv90ySPtGhFFdYJJLf3Apk5CkSzr0KbVdisQkuSAJEnDTYkjPAEMua0VCtCFfz9R6Vht8UacBe7opAnGa7AbLWjHcsnARGMbn7a9npaMflftM7jvb200TWxUC4lte929joZrAIuIao1ZqdroCL55LT4Q8kNyvsIzPx4i59lKTq2JBBsZbQCECtwarVBMTH1QR6v5srWhRrD4rwf8ik7KHEgeerFVTErONmlQ5LR8vXNZLB39UDzRHZbH9fTBhRwkA2n3pg4IgrHxdfEFuz6REtDqPdwN7HTVtcE18hW6yn4GnnCE3MEQ51iPsGZ2GLbtCSthuzvPFeE28MM23ugTCdj7z7AvTLa1AGLiJ5JwWCiDPyMqa8tAKQZ9cfP42kuUzV3h6GsGFoWm9hcfj51dGtWyZzC5DaVt2Wi5IIsgDB0cXLM1FtExERIZIZ0RtQUtWcUCmFmSjxvWpZcgldopk0D7aEouRkuIdOZdWFORuqbPY6HkWOVi7FuVMLWnxpSaNomkrC5uIZK9CjpJyUIeO6kgb7tr2SCYx5F11S6XqOImrs7vv0uvAgrb9hGPFnkRMj92HgczJ660kHbBBlQSIOY7FcX0cuyDlLjbU3F6vZkGbaKaMufjuxpn4Mi457MoLNW3eImcj6OOSe59afAhglt9SBOiFcYQipj5uN19NKZ5Czc231wxGx1utgJB4ueMxx5lrs8gVbZs1NEfI02RbpkfEOZE4eseo9teNRUAinujfeJYaEhns0Y6XRUF1PCf5eEAL9DL6a2vmBAU5AuDDtyQN5YLLWwPWGjMt4hu4FIoLCZLxeBVY5lZDCD5YyBwOIJeHVQsKobYdqfCX1tomCbEj5m1pNx9pnLn5A3g7Uv777YUgBRlNrTyjshaqBZXeAFtjyFlWjfc57t2fabx5Ns4dclCMJcTlqkfquFDiSdDPeX6mYLQzJzUmH043MlgFedNmXQPjAoba07MYwBaC4CnjI4dwKCZPO9wx3en8AoqX7JjN8KlqjQ5cbMSdhRFstQ8Qr2ve2HT0uO5WjTAiiIWn1CWrU1BHBMvJ3ywmAdqNDLY8lbxXMx0DDvco3RL9Qz5eqywVYqENnO8MH0PYzeVNi3yb2msNYYWzG2DCPoG1VbBxe9oZGcTU3AZuEKbkp6rNeTX0DSMczd91nbSVDKEkVazIqNKUQapNBP5B32EyprwPFLvuPiwRPl1GTdQBZEAw3d90v8P5CPAnX4Yo2q7syr5BW8HcT7tMiohaBW9U4qrbumEQ6XzMKR2BREFXk3ZOMVMYSw9SF5ekq0myNKGnH0qivlRA18CbEzidOiuyZZ6kRooJkLQ0EwmzsKlld6KrKJmRxls12KG2bv8vLxfJwrIcU6Hxpq6pFy7OimmodXYtKt0VVH22OCAjfdeTBAPvPloKQzLEOQlqdpzxJ6JIzUjnTqYsQ4BDQPW6784xNUfsk0aM78qzMuL9MrAcuVVKY55nM7WqnB2RCpGZvHhWUNg93F2eRT8UumC62VH3ZdJXLMScca1mxoOO6oOLOVzfpOBOX5EvKuLz5sEW8a9yotqkcKbDJNUslpYMJpJjOWUy2U4YVKH6kVC1Vx1uvykOyDszo5bzd36qWH1kJ7JtkgV1JxqrFnqmcUyZJTp9oFIcFAk0ITA93SrLaxO9oUZ3jG6fBRL1iZ7ZE6zj8G3MHu86Ayjt3flYcmTkjiTSYvCFtJLqcJPtN7E3POqGOKe03K3WV0epWXDQC97YSbADZUNp81GFfCPbj3iqEt0ENXypLvfoIz6zoFoF9lkIunXjYyYL52UbRBjxkQUSU9mmXtzIHOCz1KH49ez6PzqWF223C0Iz3CsvuTR9sVtQCcM1eopDPy2lEEzLU0USJtJb9zgyGyfiQ4foCx26k4jLE0ula6aSIrZQHER5HVCEBL55WCtB2LCmveTDzVcp7URgI7QuFbFw9VTxJwGrzsVWM9sMJeJNd2VGGFsiWuqC3YxXoJGKwIo71fgsGm0PYFBzX8eX7pf9GJb1oXUs1q06KPLsMucNytQbL0Z0Qqm1lSPj9MTetkL6KfsC6ZobYhc2quXy9GPmZYj1GoeifeJ3pRAfn6Ypy6jNs4Y5nSEpqN4mRmamAGfYHhSaBrLsDTHCSElUyRMh66XU7hNzpZVC5VnV7VjL7kvWKf7P5hj6t1vugkLGdNX8bgOXHWm6W4YEmxFG4WaNEbGKsv0p4OG0NrduTeZaxNXqV4BpmOdXIq9abPeDPbUZ4NXtohbYegCfxBNttEwcDYSD637jJ2ms6Ta1J2xZPtKnPwAXAtJARc8n5d93TZi7q6WonEDLwWSzeSueYFX8cMhmY6is15pXaOYBbVfSChaLkBRKs6UOqG4jDVabfbdtnyfiDBFI7uhB39FJ6mYrCUUTf2X38J43KyZg87igFR5Rz1t3jH9xlOg1h7P7Ww8wjMJqH3l5J5wU8eH0OogRCvL7fJJg1ugRfMXIGSuEEfbh3hdNY3x197jRqePcdusbfkuJhEpwMvNBZVzLuqxJ9b1BTfYkRJLjOo1aEPIXvZAjvXnefhKGsJGawqjtU7r6MPoydEH26203mGiJhFnTNCDBYlnPoKO6PuXU3uu9mSg41vmakk0EWUpSUtGBtDe6dKdxZNTFuTi1fMcMhq7POvf0hgHl8fqvI3RK39fn9MaCZgow6e1iXjKC5lHOlpGpkKXdDxtz0HxEfSMjXYL8Fvh7dmJkE8QAKDo1FqMLHOZ2iL9iIm3LKvaYiNK9sb48NxwYNR0nx2t5bWCkx2a31ka8fUIaRGzr7oigRX5sm9PQ7Sr5StZEYmp8VIWShdzgDI9vRF5J81x33nNefjBTVvGPvGsxQhAlGFbe1bQi6JapOJJaceGq1vvb8rF2F3M68eDlzGtXtVm5y14vmwIXa2OGYhxUsXJ0qgl5ZGAtHPZdoDWrSbBSuNKi6KWgr39s9tc7WM4Aws1PzI5cCO7Z8y9lMTLAdwhzMxz9hjlWHjbJ5CqMjhty9lMn4rc76AmkKJimvH9rOtbctCKrsiB04cFVDl1gcvfWh65nxy9ZS4WPyoQByr3vfBkjTZKtEZ7rUfdMicdyCVqnD036HJWMtYfL9fyXxO7mIcFE1OuLQsAQNfWv6kV8Im7Q6GsXNCV0YPoCjnWn6L25qUMTe71vahnHDAoXAbTczhPcfjrjW5M5G0nzNM5TnlJWOPLhM6U2ZFxwpg4NejP8UQ09JX9n7SkEWixERwgyFvttzp4Asv5FTnnMzLVhFUn56tFYCxZ1pzezqZBJy5oKS8BhHsdnKkHgnZlUCm7j0IvYjQE} such that  \[ \frac 1{|Q|^{q/3}} \int_Q |f|^p \,dx \to 0 \text{ as } |Q|\to \I, Q\in \mathcal C. \] \end{definition}  \par \par Our main result asserts the existence of a  global in time local energy solutions  to the Navier-Stokes equations with data in $\mathring M^{2,2}_{\mathcal C}$. \par \par \par \begin{theorem}[Global existence]\label{thrm.main} Assume $u_0\in \mathring M^{2,2}_{\mathcal C}$ is divergence-free. Then there exists $u\colon\R^3\times (0,\I)\to \R^3$ and $p\colon\R^3\times (0,\I)\to \R$   so that $(u,p)$ is a local energy solution to the Navier-Stokes \llobet{BzQ3ETfDlCad7VfoMwPmngrDHPfZV0aYkOjrZUw799etoYuBMIC4ovEY8DOLNURVQ5lti1iSNZAdwWr6Q8oPFfae5lAR9gDRSiHOeJOWwxLv20GoMt2Hz7YcalyPZxeRuFM07gaV9UIz7S43k5TrZ} equations on $\R^3\times (0,\I)$. \end{theorem}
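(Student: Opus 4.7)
The plan is to construct the global solution by a compactness argument: approximate $u_0$ by a sequence $u_0^{(k)}$ of divergence-free fields for which global local energy solutions are already known, derive uniform-in-$k$ a priori bounds in the weighted norm, and extract a limit.

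First, I would take $u_0^{(k)}$ to be a divergence-preserving truncation of $u_0$ to the ball $B_k(0)$, correcting the divergence produced by a smooth cutoff via the Bogovskii operator on an annulus. Each $u_0^{(k)}\in L^2(\R^3)\subset \mathring M^{2,2}_{\mathcal C}$, with $\|u_0^{(k)}\|_{M^{2,2}_{\mathcal C}}\lec \|u_0\|_{M^{2,2}_{\mathcal C}}$ and tail functional $|Q|^{-2/3}\int_Q|u_0^{(k)}|^2$ dominated uniformly by that of $u_0$. Corresponding global local energy solutions $u^{(k)}$ exist by Leray's theorem (or by \cite{BT8}, whose decay hypothesis is satisfied by $u_0^{(k)}$).

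The core step is a uniform a priori estimate
\EQN{
\sup_{t\in[0,T]}\|u^{(k)}(t)\|_{M^{2,2}_{\mathcal C}}^2+\int_0^T\|\nabla u^{(k)}\|_{M^{2,2}_{\mathcal C}}^2\,dt \le F\bigl(T,\|u_0\|_{M^{2,2}_{\mathcal C}}\bigr),
}
with $F$ locally bounded in $T$ and independent of $k$. I would derive it by applying the local energy inequality of Definition~\ref{def:localenergy}(4) with a cutoff $\phi_Q$ supported in $Q^{**}$ and equal to $1$ on $Q$, combined with the pressure decomposition of Definition~\ref{def:localenergy}(6). The local part of the pressure is bounded by $\|u\|_{L^3(Q^{**})}^2$ and absorbed via Sobolev interpolation against $\|\nabla u\|_{L^2(Q^{**})}$ and $\|u\|_{L^2(Q^{**})}$. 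The nonlocal part uses the kernel estimate $|K_{ij}(x-y)-K_{ij}(x_Q-y)|\lec |Q|^{1/3}/|x_Q-y|^4$ together with the combinatorial features (i)--(iv) of $\mathcal C$ to sum across the cover; the outcome is a differential inequality $A'(t)\lec A(t)+A(t)^{3/2}$ for $A(t):=\|u^{(k)}(t)\|_{M^{2,2}_{\mathcal C}}^2$, yielding the desired $F$.

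With the uniform bound, an Aubin--Lions argument on each cube (using $u^{(k)}\in L^\infty_tL^2_x\cap L^2_tH^1_x$ locally plus $\partial_t u^{(k)}$ in a negative Sobolev space via the equation and the pressure formula) produces a subsequence converging to $u$ strongly in $L^3_\loc(\R^3\times(0,\I))$ and weakly in the local energy norm, together with a limit pressure $p$ obtained from the pressure decomposition. Passing to the limit in the six conditions of Definition~\ref{def:localenergy} is routine except for the local energy inequality and the pressure identity, both of which follow from the strong $L^3_\loc$ convergence and the lower semicontinuity of the dissipation. The decay-at-infinity condition defining $\mathring M^{2,2}_{\mathcal C}$ is propagated from $u_0$ by applying the same a priori scheme to the tail functional $\sup_{|Q|>R}|Q|^{-2/3}\int_Q|u|^2$ and sending $R\to\I$.

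The principal obstacle is the pressure estimate. The nonlocal tail of the pressure naturally couples all scales of the cover $\mathcal C$, and controlling it relies essentially on the cube count in $\bigcup_{i=0}^{n-1}S_i$ growing only linearly in $n$ and on $|Q|^{1/3}$ being proportional to $|x_Q|$, so that the kernel gain $|Q|^{1/3}/|x_Q-y|^4$ absorbs the weight growth. Making this quantitative enough to close the differential inequality for $A(t)$ uniformly in $k$, while simultaneously tracking the tail-decay needed for $\mathring M^{2,2}_{\mathcal C}$ (as opposed to $M^{2,2}_{\mathcal C}$) membership, is the main analytic effort.
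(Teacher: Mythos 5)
Your overall architecture (approximate $u_0$ by divergence-free $L^2$ data, take global suitable solutions for the approximations, prove a uniform weighted a priori bound via the local energy inequality with a near/far pressure splitting, then extract a limit by compactness and a diagonal argument) matches the paper's, and your identification of the kernel estimate $|K_{ij}(x-y)-K_{ij}(x_Q-y)|\lesssim |Q|^{1/3}/|x_Q-y|^4$ together with the combinatorics of $\mathcal C$ as the engine of the pressure bound is also correct. But the core step as you state it does not close, and the gap is exactly where global (rather than local) existence has to be earned. The differential inequality you derive, $A'(t)\lesssim A(t)+A(t)^{3/2}$ with $A(t)=\|u^{(k)}(t)\|_{M^{2,2}_{\mathcal C}}^2$, is superlinear, so it only controls $A$ up to a finite time of order $A(0)^{-1/2}$ (the comparison ODE $y'=y^{3/2}$ blows up at $t\sim y_0^{-1/2}$). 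There is no function $F(T,\|u_0\|_{M^{2,2}_{\mathcal C}})$ finite for all $T$ coming out of such an inequality; what you get is precisely the local-in-time result of \cite{BK1}, which this theorem is meant to improve. Note also that your scheme never uses the decay condition defining $\mathring M^{2,2}_{\mathcal C}$ (as opposed to $M^{2,2}_{\mathcal C}$) in the existence-time estimate --- you only invoke it to propagate tail decay to the limit --- whereas the decay is exactly what makes globality possible.

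The missing idea is the family of coarsened covers $\mathcal C_n$, in which all cubes of side-length below $2^n$ are merged into the single cube $Q_{n-1}$, and the corresponding norms $\|\cdot\|_{M^{2,2}_{\mathcal C_n}}$. Running your local energy scheme at scale $n$ (Theorem~\ref{thrm.main2}) yields a bound of the form $\alpha_n(t)\le c_0\alpha_n(0)+C2^{-2n}\int_0^t\alpha_n+C\int_0^t\alpha_n^3$, hence control up to time $T_n\sim\bigl(2^{-2n}+\|u_0\|_{M^{2,2}_{\mathcal C_n}}^4\bigr)^{-1}$: still only local in time for each fixed $n$, but with the crucial feature that $u_0\in\mathring M^{2,2}_{\mathcal C}$ forces $\|u_0\|_{M^{2,2}_{\mathcal C_n}}\to 0$ (Lemma~\ref{lemma.init}), so $T_n\to\infty$. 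Globality is then obtained not from a single global differential inequality but by a diagonal extraction over the expanding space-time regions $B_n\times(0,T_n)$. Without this (or some substitute mechanism converting the decay at spatial infinity into a divergent sequence of existence times), your proposal proves only local existence.
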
 \par \par Theorem~\ref{thrm.main} is proven  by  first establishing the equivalence of the norm in  $\mathring M^{2,q}_{\mathcal C}$ with the norm on the space $\mathring M^{2,q}_{\mathcal C_n}$ described next. Let   \[   \| u_0\|_{ M_{\mathcal C_n}^{2,q}}^2 = \sup_{Q\in \mathcal C_n} \frac 1 {|Q|^{q/3}} \int_Q |u_0|^2\,dx,  \] where   \begin{equation}    \mathcal C_n = \{ Q_{n-1}\} \cup \{ Q\in \mathcal C : \exists k\geq n \text{ such~that }Q\in  S_k   \},       \label{EQL5TS2kIQjZKb9QUx2Ui5Aflw1SLDGIuWUdCPjywVVM2ct8cmgOBS7dQViXR8Fbta1mtEFjTO0kowcK2d6MZiW8PrKPI1sXWJNBcREVY4H5QQGHbplPbwdTxpOI5OQZAKyiix7QeyYI91Ea16rKXKL2ifQXQPdPNL611}   \end{equation} and, by \eqref{EQL5TS2kIQjZKb9QUx2Ui5Aflw1SLDGIuWUdCPjywVVM2ct8cmgOBS7dQViXR8Fbta1mtEFjTO0kowcK2d6MZiW8PrKPI1sXWJNBcREVY4H5QQGHbplPbwdTxpOI5OQZAKyiix7QeyYI91Ea16rKXKL2ifQXQPdPNL605}, $Q_{n} = \bigcup_{i\leq n;Q'\in S_i} Q'$.   Thus we have replaced cubes in $\mathcal C$ with the side-lengths \llobet{iDMt7pENCYiuHL7gac7GqyN6Z1ux56YZh2dyJVx9MeUOMWBQfl0EmIc5Zryfy3irahCy9PiMJ7ofoOpdennsLixZxJtCjC9M71vO0fxiR51mFIBQRo1oWIq3gDPstD2ntfoX7YUoS5kGuVIGMcfHZ} less than $ 2^n$ by a single cube of side-length $2^{n+1}$.  We say $u_0\in M_{\mathcal C_n}^{2,q}$ if $\|u_0\|_{M_{\mathcal C_n}^{2,q}} <\I$. The relationship between  $\mathring M^{2,2}_{\mathcal C}$ and $\mathring M^{2,2}_{\mathcal C_n}$ is described in Lemma~\ref{lemma.init} \llobet{e37ZoGA1dDmkXO2KYRLpJjIIomM6Nuu8O0jO5NabUbRnZn15khG94S21V4Ip457ooaiPu2jhIzosWFDuO5HdGrdjvvtTLBjovLLiCo6L5LwaPmvD6Zpal69Ljn11reT2CPmvjrL3xHmDYKuv5TnpC} below. The solution constructed in the proof of  Theorem~\ref{thrm.main} also satisfies \[ \esssup_{0<s<t} \| u(s)\|_{  M^{2,2}_{\mathcal C_n}}^2+\sup_{Q\in \mathcal C_n} \frac 1 {|Q|^{2/3}}\int_0^t\int_{Q}|\nb u(x,s)|^2\,dx\,ds    \leq C\|u_0\|_{M^{2,2}_{\mathcal C_n}}^2 \] for $t\leq T_n$, \llobet{1fMoURRToLoilk0FEghakm5M9cOIPdQlGDLnXerCykJC10FHhvvnYaTGuqUrfTQPvwEqiHOvOhD6AnXuvGlzVAvpzdOk36ymyUoFbAcAABItOes52Vqd0Yc7U2gBt0WfFVQZhrJHrlBLdCx8IodWp} where $T_n$ is given in \eqref{EQL5TS2kIQjZKb9QUx2Ui5Aflw1SLDGIuWUdCPjywVVM2ct8cmgOBS7dQViXR8Fbta1mtEFjTO0kowcK2d6MZiW8PrKPI1sXWJNBcREVY4H5QQGHbplPbwdTxpOI5OQZAKyiix7QeyYI91Ea16rKXKL2ifQXQPdPNL607} below. \par Note that  Theorem~\ref{thrm.main}  improves the constructions in \cite{BK1,FL,BT8}. The details are given in Section~\ref{sec.data}. \par In \cite{Basson}, Basson considered local existence in two dimensions for initial data $u_0$ satisfying  \[ \sup_{R\geq 1} \frac 1 {R^2}\int_{|x|\leq R} |u_0|^2\,dx<\I. \] Basson left open the global \llobet{AlDS8CHBrNLzxWp6ypjuwWmgXtoy1vPbrauHyMNbkUrZD6Ee2fzIDtkZEtiLmgre1woDjuLBBSdasYVcFUhyViCxB15yLtqlqoUhgL3bZNYVkorzwa3650qWhF22epiXcAjA4ZV4bcXxuB3NQNp0G} existence of weak solutions in both~2D and~3D. In three dimensions  \[ u_0\in \mathring M^{2,2}_\mathcal C \iff \lim_{R\to \I} \frac 1 {R^2}\int_{|x|\leq R} |u_0|^2\,dx=0. \] Based on the global existence result Theorem~\ref{thrm.main},  we  expect that, in two dimensions, we get  \llobet{xW2Vs1zjtqe2pLEBiS30E0NKHgYN50vXaK6pNpwdBX2Yv7V0UddTcPidRNNCLG47Fc3PLBxK3Bex1XzyXcj0Z6aJk0HKuQnwdDhPQ1QrwA05v9c3pnzttztx2IirWCZBoS5xlOKCiD3WFh4dvCLQA} the global existence  if  \[ \lim_{R\to \I} \frac 1 {R^2}\int_{|x|\leq R} |u_0|^2\,dx=0, \] which is a subset of Basson's class that has very mild decay at spatial infinity.   \par \par \par \par Using the  new a priori bounds in the proof of Theorem~\ref{thrm.main} allows us to establish the following eventual regularity result. \par \begin{theorem}[Eventual Regularity]\label{thrm.er} Assume $u_0\in \mathring M^{2,1}_{\mathcal C}$  is divergence-free, and let $(u,p)$ be a local energy solution on $\R^3\times (0,\I)$ with initial data $u_0$. Assume additionally that   \EQN{ \esssup_{0<s<t} \| u(s)\|_{  M^{2,1}_{\mathcal C}}^2+\sup_{Q\in \mathcal C} \frac 1 {|Q|^{1/3}}\int_0^t\int_{Q}|\nb u(x,s)|^2\,dx\,ds<\I, } for all $t<\I$. Then for any $\delta>0$, there exists a time $\tau$ depending only on $u_0$ and $\delta$  so that $u$ is smooth on  \EQN{ \bigl\{  (x,t) : t> \max \{\delta|x|^2, \tau\}\bigr\}. } \end{theorem}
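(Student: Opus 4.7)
The plan is to apply a Caffarelli--Kohn--Nirenberg-type $\varepsilon$-regularity criterion at a well-chosen scale at each space-time point $(x_0,t_0)$ satisfying $t_0 > \max\{\delta|x_0|^2,\tau\}$. The smallness needed to trigger $\varepsilon$-regularity will come from two different sources depending on where $(x_0,t_0)$ sits: for $|x_0|$ large one exploits the smallness of $u_0$ on far-away cubes (the defining feature of $\mathring M^{2,1}_{\mathcal C}$) at scale $r\sim |x_0|$; for $|x_0|$ bounded and $t_0$ large one exploits the eventual smallness of the tail of the dissipation $\int_\tau^\infty\!\int_{B_R}|\nabla u|^2$, which is finite by the standing hypothesis. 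I would therefore split the parabolic region into a far-field piece $\{|x_0|>R_0\}$ and a near-field piece $\{|x_0|\le R_0,\ t_0>\tau\}$, choosing $R_0$ first to handle the former and then $\tau$ large enough (in particular $\tau\ge \delta R_0^2$) to handle the latter.

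\textbf{Far-field step.} Since $u_0\in \mathring M^{2,1}_{\mathcal C}$, for any $\eta>0$ there exists $R_\eta$ with $|Q|^{-1/3}\int_Q|u_0|^2<\eta$ for every cube $Q\in\mathcal C$ of side-length $\ge R_\eta$. I would then revisit the localized energy estimate that underlies Theorem~\ref{thrm.main} (with the weight $|Q|^{1/3}$ rather than $|Q|^{2/3}$) to show that, for $|x_0|\gtrsim R_\eta$ and $r\sim|x_0|$, the scale-invariant local energy
\[
\frac{1}{r}\esssup_{t_0-r^2<s<t_0}\int_{B_r(x_0)}|u(s)|^2\,dx
+\frac{1}{r}\int_{t_0-r^2}^{t_0}\!\int_{B_r(x_0)}|\nabla u|^2\,dx\,ds
\]
is bounded by $C\eta$, uniformly in $|x_0|$. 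The constraint $t_0>\delta|x_0|^2$ guarantees $r^2\le t_0/\delta$ so that the cylinder is contained in the slab of definition. Combined with the nonlocal pressure representation (item 6 of Definition~\ref{def:localenergy}) used to bound $r^{-2}\iint_{Q_r}|p-p_Q|^{3/2}$ scale-invariantly, the CKN criterion applies at $(x_0,t_0)$ and delivers smoothness there.

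\textbf{Near-field step.} For $|x_0|\le R_0$ the hypothesis gives a cube $Q\in\mathcal C$ with $|Q|^{-1/3}\int_0^\infty\!\int_Q|\nabla u|^2\le C<\infty$ containing a neighborhood of $B_{R_0+1}$, so $\int_0^\infty\!\int_{B_{R_0+1}}|\nabla u|^2<\infty$. Hence there is $\tau=\tau(R_0,\eta)$ such that $\int_\tau^\infty\!\int_{B_{R_0+1}}|\nabla u|^2<\eta$. For $t_0>\tau+1$ and $|x_0|\le R_0$ I would then apply the CKN criterion at the fixed scale $r=1$; the $|\nabla u|^2$ contribution is bounded by $\eta$, while the pressure and $|u|^3$ scale-invariant integrals are controlled by the standing bound on $\|u(t)\|_{M^{2,1}_{\mathcal C}}$ and the pressure formula. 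Choosing $R_0\ge \sqrt{\tau/\delta}$ (and enlarging $\tau$ if necessary so that $\tau\ge \delta R_0^2$) then covers the full parabolic region, completing the proof modulo bootstrapping from boundedness to $C^\infty$, which is standard.

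\textbf{Main obstacle.} The most delicate point is to produce the required smallness of the pressure in the far-field step. Because the pressure is nonlocal, the smallness of $u_0$ (and hence, after propagation, of $u(t)$) on the particular cube of interest does not automatically imply smallness of $\iint |p-p_Q|^{3/2}$ on that cube. One has to split the representation in item 6 of Definition~\ref{def:localenergy} into near and far contributions with respect to the cube $Q$ centered near $x_0$, use the Calder\'on--Zygmund kernel difference $K_{ij}(x-y)-K_{ij}(x_Q-y)$ to gain extra decay for the far part, and then sum $|Q'|^{1/3}\|u\|_{L^2(Q')}^2$ over $Q'\in\mathcal C$ using the geometric properties (i)--(iv) of the cover and the $\mathring M^{2,1}_{\mathcal C}$ smallness. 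A related but lesser difficulty is verifying that $\mathring M^{2,1}_{\mathcal C}$ smallness of $u_0$ is actually propagated uniformly in time to $u(t)$, which one can obtain from the local energy argument of Theorem~\ref{thrm.main} by restricting to far cubes and observing that all terms entering the bound are driven by $|Q|^{-1/3}\int_Q|u_0|^2$ on a bounded family of neighboring cubes.
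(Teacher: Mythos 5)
Your overall strategy (trigger an $\epsilon$-regularity criterion using the decay of $u_0$ in $\mathring M^{2,1}_{\mathcal C}$, propagated forward by the local energy estimates) is the right one, but the cylinders you choose do not match the a priori information that is actually available, and this creates two genuine gaps. First, in the far-field step you apply the criterion at scale $r\sim|x_0|$ centered at $x_0$ for every $t_0>\delta|x_0|^2$. The propagated smallness (Remark~\ref{remark.1}) says that $\sup_{Q\in\mathcal C_n}|Q|^{-1/3}\int_Q|u(t)|^2\,dx\lesssim\|u_0\|^2_{M^{2,1}_{\mathcal C_n}}$ only for $t\le\bar T_n\sim 2^{2n}$, and the cubes of $\mathcal C_n$ have side length at least $2^n$; so at time $t_0$ the smallest spatial scale at which one has scale-invariant smallness is $\sqrt{t_0}$, not $|x_0|$. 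Points with $|x_0|>R_0$ and $t_0\gg|x_0|^2$ lie in the paraboloid, are not covered by your near-field step, and for them the cylinder $B_{|x_0|}(x_0)\times(t_0-|x_0|^2,t_0)$ sits at a scale far below $\sqrt{t_0}$, where neither the hypotheses nor the a priori bounds give smallness (for the same reason, your claim that $\mathring M^{2,1}_{\mathcal C}$-smallness on far cubes propagates \emph{uniformly in time} is unjustified: the Gr\"onwall argument only runs up to $\bar T_n$, and energy can flow into a fixed far cube from its neighbors over long times). Second, in the near-field step the assertion $\int_0^\infty\int_{B_{R_0+1}}|\nabla u|^2<\infty$ does not follow from the hypothesis, which is only a qualitative finiteness for each finite $t$; quantitatively, Remark~\ref{remark.1} applied to the central cube $Q_{n-1}$ gives $\int_0^{c_*2^{2n}}\int_{B_R}|\nabla u|^2\lesssim 2^{n}\|u_0\|^2_{M^{2,1}_{\mathcal C_n}}=o(2^n)$, which is compatible with the dissipation on a fixed ball diverging as $t\to\infty$. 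Even granting a small dissipation tail, at the fixed scale $r=1$ the quantity $r^{-2}\iint|u|^3$ is controlled by $\esssup_t\|u(t)\|_{M^{2,1}_{\mathcal C}}$ only up to an $O(1)$ constant (the $\|u\|_{L^2(B_1)}^3$ part of the Gagliardo--Nirenberg bound does not become small), so the $\epsilon_*$ threshold is not reached.

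The paper avoids both problems by using a single family of expanding cylinders centered at the origin: with $Q=Q_{n-1}$, $r=\sqrt{c_*}\,2^n$ and final time $\bar T_n=c_*2^{2n}$, the quantity $|Q|^{-2/3}\int_0^{\bar T_n}\int_Q(|u|^3+|p-p_Q|^{3/2})$ is bounded by $C\|u_0\|^2_{M^{2,1}_{\mathcal C_n}}\to0$ via Lemmas~\ref{lemma.pressure} and~\ref{lemma.cubic} with $q=1$ together with Remark~\ref{remark.1}; here the spatial and temporal scales are matched parabolically, which is exactly what Remark~\ref{remark.1} provides. The shrunken cylinders $B_{\sigma r}(0)\times[(1-\sigma^2)r^2,r^2]$ with $\sigma^2=(1+\delta/4)^{-1}$ then tile $\{t>\max\{\delta|x|^2,\tau\}\}$. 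If you replace your choice $r\sim|x_0|$ by $r\sim\sqrt{t_0}$ throughout (note $|x_0|\le\sqrt{t_0/\delta}$, so $B_r(x_0)$ is then comparable to an origin-centered ball of radius $\sim\sqrt{t_0}$), your argument collapses to the paper's and the near/far dichotomy becomes unnecessary.
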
 \par  This follows from a~priori bounds below,  ideas in \cite{BT8}, and a  partial regularity result of~\cite{CKN} (in particular, the version in \cite{L98}); cf.~also \cite{Ku,LS99}.  This is variant of the usual notion of eventual regularity as it is not uniform in $x\in \R^3$.  The class $\mathring M^{2,1}_{\mathcal C}$ excludes DSS data and leaves DSS solutions as the borderline candidate for the failure of eventual regularity even for our non-uniform version.  Note that the shape of the regular set is consistent with that obtained for DSS solutions in~\cite{KaMiTs}. In contrast, all self-similar solutions in this class are \labell{xyXOjdFsMrl54EhT8vrxxF2phKPbszrlpMAubERMGQAaCBu2LqwGasprfIZOiKVVbuVae6abaufy9KcFk6cBlZ5rKUjhtWE1Cnt9RmdwhJRySGVSOVTv9FY4uzyAHSp6yT9s6R6oOi3aqZlL7bIvWZ18cFaiwptC} regular by \cite{grujic}.  \par It turns out that the spaces $M^{p,q}_\mathcal C$ are equivalent to certain Herz spaces. Let $A_k = \{ x \in \R^n: 2^{k-1} \le |x|< 2^k\}$. For $n \in \N$, $s \in \R$ and $p,q\in (0,\infty]$, the \emph{homogeneous Herz space} $\dt K^{s}_{p,q}(\R^n)$ is the space of functions $f \in L^p_{\loc}(\R^n \setminus \{0\})$ with finite norm \[ \norm{f}_{\dt K^{s}_{p,q}} = \left \{ \begin{alignedat}{3} &  \bigg( \sum_{k \in \Z}  2 ^{ksq} \norm{f}_{L^p(A_k)}^q \bigg)^{1/q}  \qquad &\text{if } q<\I,  \\ &  \sup _{k \in \Z} 2^{ks} \norm{f}_{L^p(A_k)}  \qquad &\text{if } q=\I. \end{alignedat} \right . \] The \emph{non-homogeneous Herz space} $K^{s}_{p,q}(\R^n)$ is, with $A_0$ redefined as $B_1$, \[ \norm{f}_{ K^{s}_{p,q}} = \left \{ \begin{alignedat}{3} &  \bigg( \sum_{k \in \N_0}  2 ^{ksq} \norm{f}_{L^p(A_k)}^q \bigg)^{1/q}  \qquad &\text{if } q<\I,  \\ &  \sup _{k \in \N_0} 2^{ks} \norm{f}_{L^p(A_k)}  \qquad &\text{if } q=\I. \end{alignedat} \right . \] Then the space $M{}^{p,q}_{\mathcal C}$ is \llobet{NAQJGgyvODNTDFKjMc0RJPm4HUSQkLnTQ4Y6CCMvNjARZblir7RFsINzHiJlcgfxSCHtsZOG1VuOzk5G1CLtmRYIeD35BBuxZJdYLOCwS9lokSNasDLj5h8yniu7hu3cdizYh1PdwEl3m8XtyXQRC} equivalent to the space with  the norm \[    \left( \int_{|x|<1} |f|^p dx \right)^{1/p}+           \sup_{R=2^k,k\in\N}         \left(\frac 1{R^q} \int_{R<|x|<2 R} |f|^p dx\right)^{1/p}    , \] i.e., with the  non-homogeneous Herz space $K^{-q}_{p,\infty}$. Local in time existence of mild solutions for large data in some subcritical weak Herz spaces (which include the Herz spaces) has been established by Tsutsui \cite{Tsutsui} ($p>3$ is required). These spaces also appear in \cite{KaMiTs} \par The paper is organized as follows.  In Section~\ref{sec.data} we reformulate the assumption $u_0\in \mathring M^{2,2}_\mathcal C$ in a more usable context.  In Section~\ref{sec.aprioribounds} we establish a new a priori bound that can be pushed to arbitrarily large times. In Section~\ref{sec.approximation} we use the a priori bound from Section~\ref{sec.aprioribounds} to construct global solutions, while in Section~\ref{sec.er} we prove Theorem~\ref{thrm.main}. \par \par \section{The space of initial data}\label{sec.data} \par We first slightly  reframe the assumption on the initial data.  \par \begin{lemma}\label{lemma.init} Let $f\in L^2_\loc$.  The following statements are equivalent \begin{enumerate} \item $f\in \mathring M^{2,2}_{{\mathcal C}}$, \item $\|f\|_{M^{2,2}_{\mathcal C_n}} \to 0$ as $n\to \I$, \item $\lim_{R\to \I} \frac 1 {R^2} \int_{B_R(0)} |f|^2\to 0$. \end{enumerate} \end{lemma}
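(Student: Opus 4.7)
I plan to prove the cycle of implications $(1)\Rightarrow(3)\Rightarrow(2)\Rightarrow(1)$. Two geometric facts underlie the entire argument: (a) by property (i) of $\mathcal C$, every $Q\in\mathcal C$ lies in a ball $B_{cR}(0)$ with $R\asymp|Q|^{1/3}$ for an absolute constant $c>0$, and the same containment holds for the origin-centered cube $Q_{n-1}\in\mathcal C_n$; (b) for each $R>0$, picking $n$ with $2^n\le R<2^{n+1}$ gives $B_R(0)\subset Q_n$, and $Q_n=\bigcup_{k=0}^n S_k$ is a disjoint union of $O(1)$ cubes of side-length $\asymp 2^k$ in each layer $S_k$.

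Using (a), the implication $(3)\Rightarrow(2)$ is immediate. For every $Q\in\mathcal C_n$,
\[
\frac{1}{|Q|^{2/3}}\int_Q|f|^2 \;\lesssim\; \frac{1}{R^2}\int_{B_{cR}(0)}|f|^2, \qquad R\asymp|Q|^{1/3}\gtrsim 2^n,
\]
so $\|f\|_{M^{2,2}_{\mathcal C_n}}^2 \lesssim \sup_{R\ge c'\cdot 2^n} R^{-2}\int_{B_R}|f|^2$, which vanishes as $n\to\infty$ by (3). For $(2)\Rightarrow(1)$, every $Q\in\mathcal C$ belongs to some $S_k$ and hence to $\mathcal C_k$, so $|Q|^{-2/3}\int_Q|f|^2\le\|f\|_{M^{2,2}_{\mathcal C_k}}^2$; letting $|Q|\to\infty$ forces $k\to\infty$, yielding the decay required for $f\in\mathring M^{2,2}_{\mathcal C}$. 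Finiteness of $\|f\|_{M^{2,2}_{\mathcal C}}$ follows by fixing any $n$ with $\|f\|_{M^{2,2}_{\mathcal C_n}}<\infty$ and using $f\in L^2_\loc$ to handle the finitely many remaining cubes in $S_0,\dots,S_{n-1}$.

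The only nontrivial implication is $(1)\Rightarrow(3)$. Given $R$ large, pick $n$ as in (b) and set
\[
a_k := \sup_{Q'\in S_k} \frac{1}{|Q'|^{2/3}}\int_{Q'}|f|^2.
\]
Hypothesis (1) gives $A:=\sup_k a_k<\infty$ and $a_k\to 0$. Summing over layers,
\[
\int_{B_R}|f|^2 \;\le\; \sum_{k=0}^n\sum_{Q'\in S_k}\int_{Q'}|f|^2 \;\lesssim\; \sum_{k=0}^n a_k\,2^{2k}.
\]
Given $\varepsilon>0$ choose $K$ so that $a_k<\varepsilon$ for $k\ge K$. Splitting the sum at $K$ yields a bounded piece $\lesssim A\cdot 2^{2K}$ and a decaying tail $\lesssim \varepsilon\cdot 2^{2n}$. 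Dividing by $R^2\asymp 2^{2n}$ and letting first $R\to\infty$ and then $\varepsilon\to 0$ gives~(3).

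The main obstacle is precisely this last step: the weights $2^{2k}$ in the sum $\sum a_k\,2^{2k}$ grow at exactly the same rate as $R^2$, so mere boundedness of the $a_k$'s would give no gain, and one must exploit the full decay $a_k\to 0$ via the dyadic splitting above. All other implications reduce to an embedding of cubes into balls together with a norm bound.
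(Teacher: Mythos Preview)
Your proof is correct and follows essentially the same approach as the paper. The paper organizes the equivalences pairwise ($(2)\Leftrightarrow(3)$, then $(1)\Leftrightarrow(2)$) rather than as a cycle, but the nontrivial step is identical: your $(1)\Rightarrow(3)$ and the paper's $(1)\Rightarrow(2)$ both split the sum $\sum_{Q'\subset Q_n}\int_{Q'}|f|^2$ at a threshold cube size (your $K$, the paper's $N$) and use the geometric series $\sum_{k\le n}2^{2k}\lesssim 2^{2n}$ to control the large-cube tail, while the finitely many small cubes contribute $O(1)$ and are killed by the $2^{-2n}$ normalization.
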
 \par \begin{proof}  We first check that our decay condition (2) matches (3). Assume $\|f\|_{M^{2,2}_{\mathcal C_n} } \to 0$ as $n\to \I$.   This clearly implies $  R^{-2} \int_{B_R(0)}  |f|^2\,dx \to 0$. On the other hand, if $  R^{-2} \int_{B_R(0)}  |f|^2\,dx \to 0$, then, for $Q\in \mathcal C$ with    $|Q|$ sufficiently large, we can make sure that  \[ \frac 1 {|Q|^{2/3}} \int_{Q} |f|^2 \leq C \frac 1 {R^2} \int_{B_R(0)} |f|^2 \,dx \] is arbitrarily small. This also implies  \[ \frac 1 {|Q_{n-1}|^{2/3}} \int_{Q_{n-1}}|f|^2\,dx \to 0, \] as $n\to \I$.  Since $\mathcal C_n$ contains cubes with side-length larger than $2^n$ and $Q_{n-1}$, we may  take $n$ sufficiently large and ensure that $\|f\|_{M^{2,2}_{\mathcal C_n}}$ is small. Thus (2) and (3) are equivalent. \par Finally we check the equivalence of (1) and (2). Assume $f\in \mathring M^{2,2}_{{\mathcal C}}$. It suffices to show \[ \frac 1 {2^{2n}} \int_{Q_n} |f|^2\,dx\to 0. \] Let $\e>0$ be given. Then, \llobet{AbweaLiN8qA9N6DREwy6gZexsA4fGEKHKQPPPKMbksY1jM4h3JjgSUOnep1wRqNGAgrL4c18Wv4kchDgRx7GjjIBzcKQVf7gATrZxOy6FF7y93iuuAQt9TKRxS5GOTFGx4Xx1U3R4s7U1mpabpDHg} since $f\in \mathring M_{{\mathcal C}}^{2,2}$, for any constant $\bar C$ there exists $N$ so that  \[ \frac 1 {|Q|^{2/3}} \int_Q |f|^2 \,dx \leq \frac \e {2\bar C}, \] provided $|Q|\geq 2^{3N}$ and $Q\in \mathcal C$. We now have  \[ \frac 1 {2^{2n}} \int_{Q_n} |f|^2\,dx \leq \sum_{Q\in \mathcal C; Q\subset Q_n; |Q|\geq 2^{3N}} \frac 1 {2^{2n}} \int_Q |f|^2\,dx +\sum_{Q\in \mathcal C; Q\subset Q_n; |Q|< 2^{3N}} \frac 1 {2^{2n}} \int_Q |f|^2\,dx. \] By our choice of $N$ we have  \[ \sum_{Q\in \mathcal C; Q\subset Q_n; |Q|\geq 2^{3N}} \frac 1 {2^{2n}} \int_Q |f|^2\,dx \leq \frac \e {2\bar C} \sum_{Q\in \mathcal C;Q\subset Q_n} \frac {|Q|^{2/3}} {2^{2n}} \leq \frac \e 2, \] where we have set $\bar C$ to be the uniform (in $n$) bound on the partial sum appearing above.  On the other hand, \[ \sum_{Q\in \mathcal C; Q\subset Q_n; |Q|< 2^{3N}} \frac 1 {2^{2n}} \int_Q |f|^2\,dx \leq \frac 1 {2^{2n}}  \|f\|_{M^{2,2}_\mathcal C}^2 \sum_{Q\in \mathcal C;|Q|<2^{3N}} |Q|^{2/3}. \] The last sum is bounded by  $C 2^{2N}$. By requiring $n$ to be large in comparison to $N$, we can ensure the last expression is smaller than $\e/2$. Hence, for a sufficiently large $n$,  \[ \frac 1 {2^{2n}} \int_{Q_n} |f|^2\,dx<\e,\] and we have established that (1) implies (2). \par Conversely, if $\|f\|_{\mathcal C_n} \to 0 $, then $f\in \mathring M^{2,2}_{{\mathcal C}}$ follows immediately. \end{proof} \par \par \par  \section{A priori bounds}\label{sec.aprioribounds} \par \par \labell{1ndFyp4oKxDfQz28136a8zXwsGlYsh9Gp3TalnrRUKttBKeFr4543qU2hh3WbYw09g2WLIXzvQzMkj5f0xLseH9dscinGwuPJLP1gEN5WqYsSoWPeqjMimTybHjjcbn0NO5hzP9W40r2w77TAoz70N1au09bocDS} In this section we work exclusively with $\mathcal C_n$, where $n\in {\mathbb N}$ is given. Let $Q_n = \bigcup_{m\leq n;Q'\in S_m}  Q'$.  \par \subsection{Pressure estimate} \par \par We adapt the pressure \llobet{B7BgkPk6nJh01PCxdDsw514O648VD8iJ54FW6rs6SyqGzMKfXopoe4eo52UNB4Q8f8NUz8u2nGOAXHWgKtGAtGGJsbmz2qjvSvGBu5e4JgLAqrmgMmS08ZFsxQm28M3z4Ho1xxjj8UkbMbm8M0cLP} estimates  in \cite{BK1} to the  $M^{2,q}_{\mathcal C_n}$ framework.   Denote  \[  G_{ij}f(x)=R_i R_j f(x)= - \frac 13 \de_{ij} f(x) + \pv \int K_{ij}(x-y) f(y)\,dy,\]  where $R_i$ denotes the $i$-th Riesz transform and   \[  K_{ij}(y) =  \pd_i \pd_j \frac1{4\pi |y|} = \frac {-\de_{ij}|y|^2 + 3 y_i y_j}{4\pi |y|^5}.\] Fix a cube $Q \subset \R^3$, and let $Q^*,Q^{**}$ be as defined preceding Definition \ref{def:localenergy}.     For $x\in Q^*$, let  \EQ{ \label{EQL5TS2kIQjZKb9QUx2Ui5Aflw1SLDGIuWUdCPjywVVM2ct8cmgOBS7dQViXR8Fbta1mtEFjTO0kowcK2d6MZiW8PrKPI1sXWJNBcREVY4H5QQGHbplPbwdTxpOI5OQZAKyiix7QeyYI91Ea16rKXKL2ifQXQPdPNL601} G_{ij}^Q f(x) &=   - \frac 13 \de_{ij} f(x) +\pv \int_{ y\in Q^{**}}  K_{ij}(x-y)   f(y)\,dy \\&\indeq +  \int_{ y\notin Q^{**}}  \bigl( K_{ij}(x-y) - K_{ij}(x_Q - y  )\bigr)  f(y)\,dy.  } We say that a pressure $p$, associated to a local \llobet{kicxaCjkhnobr0p4codyxTCkVj8tW4iP2OhTRF6kU2k2ooZJFsqY4BFSNI3uW2fjOMFf7xJveilbUVTArCTvqWLivbRpg2wpAJOnlRUEPKhj9hdGM0MigcqQwkyunBJrTLDcPgnOSCHOsSgQsR35M} energy solution $u$, satisfies the pressure expansion if \EQN{ p(x,t)-p_Q(t)=   (G_{ij}^Q u_i u_j )(x,t) \comma x\in Q^{*}, } for some $p_{Q}(t)\in L^{3/2}(0,T)$. \par \par \par \par \begin{lemma}\label{lemma.pressure} Fix $n\in \mathbb N$. Assume $u$ is a local energy solution to \eqref{eq.NSE} on $\R^3\times [0,T]$ with an associated pressure $p$. Then, for   $Q\in \mathcal C_n$ and $t<T$, \EQ{ &\frac 1 {|Q|^{1/3}} \int_0^t \int_{Q^*} |p-p_Q(s) |^{3/2}\,dx\,ds \\& \indeq \leq  C \sup_{Q'\in \mathcal C_n ; Q'\cap Q^{**}\neq \emptyset} \frac 1 {|Q'|^{1/3}} \int_0^t \int_{Q'} |u|^3\,dx\,ds  \\& \indeq\indeq + C |Q|^{q/2-5/6} (\log \bka{|Q|^{1/3}/2^n})^{3/2 }\int_0^t  \bigg(\sup_{Q'\in \mathcal C_n}\frac 1 {|Q'|^{q/3}} \int_{Q'} |u(x,s)|^2\,dx    \bigg)^{3/2} \,ds,    \label{EQL5TS2kIQjZKb9QUx2Ui5Aflw1SLDGIuWUdCPjywVVM2ct8cmgOBS7dQViXR8Fbta1mtEFjTO0kowcK2d6MZiW8PrKPI1sXWJNBcREVY4H5QQGHbplPbwdTxpOI5OQZAKyiix7QeyYI91Ea16rKXKL2ifQXQPdPNL608} } where we denote $\bka{x} = (1+|x|^2)^{1/2}$. \end{lemma}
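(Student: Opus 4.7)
The plan is to adapt the $L^{3/2}$ pressure estimate from \cite{BK1} to the weighted cover $\mathcal{C}_n$ via a near-field/far-field decomposition. Write $\ell = |Q|^{1/3}$ and decompose $p(x,t) - p_Q(t) = L(x,t) + F(x,t)$, where
\[
L(x,t) = -\tfrac{1}{3}(u_iu_j)(x,t) + \pv\!\int_{Q^{**}} K_{ij}(x-y)(u_iu_j)(y,t)\,dy
\]
is the local contribution and
\[
F(x,t) = \int_{y\notin Q^{**}}\!\bigl(K_{ij}(x-y)-K_{ij}(x_Q-y)\bigr)(u_iu_j)(y,t)\,dy
\]
is the far-field contribution. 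We estimate each, then combine and integrate in time.

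For the local term $L$, since $G_{ij} = R_iR_j$ is bounded on $L^{3/2}(\R^3)$, Calder\'on--Zygmund gives
\[
\int_{Q^*} |L|^{3/2}\,dx \leq C\int_{Q^{**}}|u|^3\,dx.
\]
By properties (i)--(ii) of $\mathcal{C}$, the cubes $Q' \in \mathcal{C}_n$ meeting $Q^{**}$ are $O(1)$ in number and satisfy $|Q'| \sim |Q|$. Hence the right-hand side is bounded by $C|Q|^{1/3}\sup_{Q'\cap Q^{**}\neq\emptyset}|Q'|^{-1/3}\int_{Q'}|u|^3\,dx$, and dividing by $|Q|^{1/3}$ and integrating in $s$ reproduces the first term of the claim.

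For the far-field term $F$, the mean value inequality applied to $K_{ij}$ together with the fixed separation between $Q^*$ and $\partial Q^{**}$ yields
\[
|K_{ij}(x-y)-K_{ij}(x_Q-y)| \leq \frac{C\ell}{|x_Q - y|^4}, \qquad x \in Q^*,\ y \notin Q^{**},
\]
so that $|F(x,t)| \leq C\ell \int_{y\notin Q^{**}} |u(y,t)|^2 |x_Q - y|^{-4}\,dy$. We partition the integration domain among the cubes $Q' \in \mathcal{C}_n$ disjoint from $Q^{**}$ and use $\int_{Q'}|u|^2 \leq |Q'|^{q/3}\|u(t)\|^2_{M^{2,q}_{\mathcal{C}_n}}$ on each. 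The sum splits into two regimes. For cubes with $|Q'| \ge |Q|$, property (i) gives that the scale of $Q'$ is comparable to its distance from $x_Q$, and there are $O(1)$ such cubes per dyadic scale, so the sum converges as a geometric series in the scale. For cubes with $|Q'| < |Q|$, property (iii) ensures that each lies at distance $\sim \ell$ from $x_Q$, and property (iv) bounds their total number in $\mathcal{C}_n$ by $C\log\bka{\ell/2^n}$. Combining,
\[
|F(x,t)| \leq C\,\ell^{q-3}\,\log\bka{\ell/2^n}\,\|u(t)\|^2_{M^{2,q}_{\mathcal{C}_n}}.
\]
Raising to the $3/2$ power, integrating over $Q^*$ (volume $\sim \ell^3$), dividing by $|Q|^{1/3}=\ell$ and integrating in $s$ produces the second term in the bound, since $\ell^3 \cdot \ell^{3(q-3)/2} / \ell = \ell^{3q/2 - 5/2} = |Q|^{q/2 - 5/6}$.

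The main obstacle is the bookkeeping for the family $\{Q' \in \mathcal{C}_n : |Q'| < |Q|\}$: these cubes span all dyadic scales from $2^n$ up to $\ell/2$ yet collectively sit at distance $\sim \ell$ from $x_Q$, and it is property (iv) of $\mathcal{C}$ (transported to $\mathcal{C}_n$) that caps their count by $O(\log\bka{\ell/2^n})$, producing the characteristic logarithmic factor. A secondary technicality is verifying the pointwise kernel gradient estimate uniformly in $x \in Q^*$ and $y \notin Q^{**}$, which is ensured by the fixed $5/4$ ratio between the side-lengths of $Q^{**}$ and $Q^*$.
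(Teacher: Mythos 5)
Your proposal is correct and follows essentially the same route as the paper: the identical near/far decomposition of $p-p_Q$ via $G_{ij}^Q$, Calder\'on--Zygmund for the local part combined with the bounded overlap and comparable volumes of neighboring cubes, and for the far part the kernel difference bound $C|Q|^{1/3}|x-y|^{-4}$ with the same two-regime cube count (logarithmically many cubes smaller than $Q$ at distance $\sim|Q|^{1/3}$, and a convergent geometric sum over the larger scales), yielding the same exponent $|Q|^{q/2-5/6}$ and logarithmic factor. The only cosmetic difference is that the paper splits the far cubes by containment in $Q_{m+1}$ versus its complement rather than by comparing $|Q'|$ with $|Q|$, which changes nothing in the estimates.
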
  \par \begin{proof} The proof follows \cite[Proof of Lemma~2.1]{BK1} with an improved bound on the far part of the pressure (cf.~the last term in \eqref{EQL5TS2kIQjZKb9QUx2Ui5Aflw1SLDGIuWUdCPjywVVM2ct8cmgOBS7dQViXR8Fbta1mtEFjTO0kowcK2d6MZiW8PrKPI1sXWJNBcREVY4H5QQGHbplPbwdTxpOI5OQZAKyiix7QeyYI91Ea16rKXKL2ifQXQPdPNL608}). For $x\in Q^*$, write  \[p(x,t)-p_Q(t) = I_\text{near}(x,t)+ I_\text{far}(x,t),\]  where $I_\text{near} (x,t)$ is the sum of the first two terms on the right hand side of \eqref{EQL5TS2kIQjZKb9QUx2Ui5Aflw1SLDGIuWUdCPjywVVM2ct8cmgOBS7dQViXR8Fbta1mtEFjTO0kowcK2d6MZiW8PrKPI1sXWJNBcREVY4H5QQGHbplPbwdTxpOI5OQZAKyiix7QeyYI91Ea16rKXKL2ifQXQPdPNL601} and $I_\text{far}(x,t)$  is the last term where $f=u_i u_j$. \par The Calder\'on-Zygmund inequality implies \EQN{ \frac 1 {|Q|^{1/3}} \int_0^t \int_{Q^*} |I_\text{near}|^{3/2}\,dx\,ds   &\leq  \frac C {|Q|^{1/3}} \int_0^t \int_{Q^{**}} |u|^3 \,dx\,ds. } Since $|Q|\sim |Q'|$ whenever $Q'\in \mathcal C_n$ and $Q' \cap  Q^{**} \neq 0 $ and there are a bounded number of cubes satisfying this property (independent of $Q$ and $n$), we have    \begin{align}    \begin{split}      &      \frac 1 {|Q|^{1/3}} \int_0^t \int_{Q^{**}} |u|^3 \,dx\,ds       \leq       C \sup_{Q'\cap Q^{**}\neq \emptyset,   Q'\in{\mathcal C}_n}\frac 1 {|Q'|^{1/3}}\int_0^t \int_{Q'}|u|^3\,dx\,ds.       \end{split}     \notag   \end{align} \par We now estimate $I_\text{far}$.   Recall that if $x_Q$ is the center of $Q$, then $Q^*$ is the cube centered at $x_Q$ with the side-length $4|Q|^{1/3}/3$, and  $Q^{**}$ is the same but with the side-length  $5|Q|^{1/3}/3$.  Also, if $Q\in \mathcal C_n$, then $Q^*$ and $Q^{**}$ only overlap cubes in $\mathcal C_n$ which are adjacent to $Q$.  If $y\notin Q^{**}$ and $x\in Q^*$, then \[ |K_{ij}(x-y)-K_{ij}(x_Q-y) |\leq       \frac{       C |Q|^{1/3}          }{       |x-y|^{4}      }    . \] Let $m\in \N$ be such that $|Q| = 2^{3m}$.   For $x\in Q^*$, we have \EQN{ |I_\text{far} (x,t)| &\leq C|Q|^{1/3} \int_{\R^3\setminus Q^{**}} \frac 1 {|x-y|^4} |u(y,t)|^2\,dy   \\&\leq C\sum_{Q'\in {\mathcal S}_1}|Q|^{1/3} \int_{Q' \cap (Q^{**})^c} \frac 1 {|x-y|^4}  |u|^2\,dy    \\&\indeq   + C\sum_{Q'\in {\mathcal S}_2}|Q|^{1/3} \int_{Q' \cap (Q^{**})^c} \frac 1 {|x-y|^4}  |u|^2\,dy, } where ${\mathcal S}_1$ is the collection of $Q'\in \mathcal C_n$ such that $Q'\subset Q_{m+1}$ while ${\mathcal S}_2$ is the collection of $Q'\in \mathcal C_n$ such that $Q'\subset Q_{m+1}^c$. \par We first focus on $\mathcal S_1$.  The number of cubes $Q'$ in $\mathcal S_1$ is bounded above by  $C \log \bka{|Q|^{1/3}/2^n}$.   Furthermore, $|Q|^{1/3}\sim |x_Q-x_{Q'}|$ whenever $Q'\in \mathcal S_1$.  We claim that if $y\in Q' \cap (Q^{**})^c $ and $x\in Q$, then $|x-y|\sim |x_Q-x_Q'|$. This is clear if $Q'$ and $Q$ are not adjacent. If they are adjacent, then we obviously have $|x_Q-x_{Q'}|\sim 2^m$ and $|x-y|\leq 3 \cdot 2^{m}$. For the lower bound, since $y\notin Q^{**}$, we must have $|x-y|\geq 2^{m-1}$. This proves our claim. Hence, \EQN{   &\sum_{Q'\in {\mathcal S}_1}|Q|^{1/3} \int_{Q' \cap (Q^{**})^c} \frac 1 {|x-y|^4}  |u|^2\,dy     \leq  C \sum_{Q'\in{\mathcal S}_1}         \frac   {|Q|^{1/3}  } {|Q|^{4/3}}  \int_{Q'} |u|^2\,dy    \\&\indeq     \leq  C \sup_{Q'\in {\mathcal S}_1} \frac   {  \log \bka{|Q|^{1/3}/2^n} } {|Q|}  \int_{Q'} |u|^2\,dy \\&\indeq    =  C \sup_{Q'\in {\mathcal S}_1} \frac   {|Q'|^{q/3}\log \bka{|Q|^{1/3}/2^n} } {|Q|} \frac 1 {|Q'|^{q/3}} \int_{Q'} |u|^2\,dy   \\&\indeq    \leq C |Q|^{q/3-1}  \log \bka{|Q|^{1/3}/2^n} \sup_{Q'\in \mathcal C_n} \frac 1 {|Q'|^{q/3}}\int_{Q'} |u|^2 \,dx. } On the other hand, if $Q'\in \mathcal S_2$ then  $Q'\cap (Q^{**})^c = Q'$ (because $Q^{**}$ only overlaps with cubes adjacent to $Q$ which are all subsets of $Q_{m+1}$).  Hence, the $\mathcal S_2$ term is bounded as \EQN{ &\sum_{Q'\in {\mathcal S}_2}|Q|^{1/3} \int_{Q'} \frac 1 {|x-y|^4}  |u|^2\,dy = \sum_{l\geq m+1} \sum_{Q'\in S_l\cap {\mathcal S}_2}\frac {|Q|^{1/3}} {|x_Q-x_{Q'}|^4} \int_{Q'} |u|^2\,dx     \\&\indeq  \leq  C|Q|^{1/3} \sum_{l\geq m+1} \frac {1} {2^{ (4-q) l }}  \sup_{Q'\in \mathcal C_n} \frac 1 {|Q'|^{q/3}}\int_{Q'} |u|^2 \,dx  \leq   \frac {C|Q|^{1/3}} {2^{ (4-q) m }}  \sup_{Q'\in  {\mathcal C}_n} \frac 1 {|Q'|^{q/3}}\int_{Q'} |u|^2 \,dx   \\&\indeq   \leq C |Q|^{q/3-1}    \sup_{Q'\in {\mathcal C}_n} \frac 1 {|Q'|^{q/3}}\int_{Q'} |u|^2 \,dx . }  Combining these estimates yields \eqref{EQL5TS2kIQjZKb9QUx2Ui5Aflw1SLDGIuWUdCPjywVVM2ct8cmgOBS7dQViXR8Fbta1mtEFjTO0kowcK2d6MZiW8PrKPI1sXWJNBcREVY4H5QQGHbplPbwdTxpOI5OQZAKyiix7QeyYI91Ea16rKXKL2ifQXQPdPNL604}, and the proof is concluded. \end{proof} \par \subsection{Main estimate}  In this section we obtain an estimate for solutions in the spaces $M^{2,q}_{\mathcal C_n}$. \par Our estimate for local cubic terms is the following. \par \begin{lemma}\label{lemma.cubic} Let $u\colon\R^3\times (0,T) \to \R^3$. Then, given $\varepsilon>0$,  we have  \EQ{   \frac {1} {|Q|^{1/3}} \int_0^t\int_{Q} |u|^3\,dx\,ds       &\leq C(\varepsilon)   |Q|^{q-4/3} \int_0^t \bigg( \frac 1 {|Q|^{q/3}} \int_{Q}|u|^2 \,dx \bigg)^{3} \,ds  \\&\indeq +  \varepsilon \int_0^t  \int_{Q}|\nb u|^2 \,dx\,ds    \\ &\indeq + C \colb {|Q|^{q/2-5/6}} \int_0^t \bigg( \frac 1 {|Q|^{q/3}} \int_Q |u|^2\,dx \bigg)^{3/2}\,ds,    \label{EQL5TS2kIQjZKb9QUx2Ui5Aflw1SLDGIuWUdCPjywVVM2ct8cmgOBS7dQViXR8Fbta1mtEFjTO0kowcK2d6MZiW8PrKPI1sXWJNBcREVY4H5QQGHbplPbwdTxpOI5OQZAKyiix7QeyYI91Ea16rKXKL2ifQXQPdPNL604} } for any cube $Q\subset \R^3$. \end{lemma}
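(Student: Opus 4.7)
The inequality does not involve the space $M^{2,q}_{\mathcal C_n}$ structure in any essential way: after canceling the explicit powers of $|Q|^{q/3}$ that appear as normalizations on the right, the claim reduces to
\EQN{
  \frac1{|Q|^{1/3}}\int_0^t\int_Q |u|^3\,dx\,ds
  \les C(\ve)|Q|^{-4/3}\int_0^t \Big(\int_Q |u|^2\,dx\Big)^3 ds
     +\ve\int_0^t\int_Q |\nb u|^2\,dx\,ds
     +C|Q|^{-5/6}\int_0^t\Big(\int_Q|u|^2\,dx\Big)^{3/2}ds,
}
so I would prove this cleaner form and then insert the $|Q|^{q/3}$ factors at the end.

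The plan is to start from a Gagliardo--Nirenberg/Sobolev inequality on a single cube. On a cube $\td Q$ of unit side length one has
\EQN{ \|v\|_{L^3(\td Q)}\les \|\nb v\|_{L^2(\td Q)}^{1/2}\|v\|_{L^2(\td Q)}^{1/2}+\|v\|_{L^2(\td Q)}, }
and cubing yields $\|v\|_{L^3(\td Q)}^3\les \|\nb v\|_{L^2(\td Q)}^{3/2}\|v\|_{L^2(\td Q)}^{3/2}+\|v\|_{L^2(\td Q)}^3$. Rescaling to a cube $Q$ of side $\ell=|Q|^{1/3}$ (with $v(y)=u(\ell y)$ so that $\|v\|_{L^p(\td Q)}^p=\ell^{-3}\|u\|_{L^p(Q)}^p$ and $\|\nb v\|_{L^2(\td Q)}^2=\ell^{-1}\|\nb u\|_{L^2(Q)}^2$), the factors of $\ell$ rearrange to
\EQN{ \|u\|_{L^3(Q)}^3 \les \|\nb u\|_{L^2(Q)}^{3/2}\|u\|_{L^2(Q)}^{3/2} + |Q|^{-1/2}\|u\|_{L^2(Q)}^3. }
This is the core space-dependent estimate; the exponent $-1/2$ on $|Q|$ is exactly what produces the final $|Q|^{-5/6}$ (after dividing by $|Q|^{1/3}$), matching the third term in the lemma.

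Next I would divide by $|Q|^{1/3}$ and integrate in time. The term $|Q|^{-5/6}\int_0^t \|u\|_{L^2(Q)}^3 ds$ is exactly the third term of the lemma once the $|Q|^{q/3}$ normalization is reinserted. For the crossed term I would apply Young's inequality in the form $ab\le \ve a^{4/3}+C(\ve) b^4$, taking $a=\|\nb u\|_{L^2(Q)}^{3/2}$ (so $a^{4/3}=\|\nb u\|_{L^2(Q)}^2$) and $b=|Q|^{-1/3}\|u\|_{L^2(Q)}^{3/2}$ (so $b^4=|Q|^{-4/3}\|u\|_{L^2(Q)}^6$). This produces precisely the $\ve\int_0^t\int_Q|\nb u|^2$ term together with $C(\ve)|Q|^{-4/3}\int_0^t\|u\|_{L^2(Q)}^6\,ds$, which is the first term of the lemma after multiplying and dividing by $|Q|^q$ to convert $\|u\|_{L^2(Q)}^6$ into $|Q|^q (|Q|^{-q/3}\|u\|_{L^2(Q)}^2)^3$.

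I do not anticipate any substantive obstacle: this is a standard subcritical interpolation followed by Young's inequality, and no global information on $u$ is used (the bound is purely local and holds for arbitrary $Q\subset \R^3$, with no assumption that $Q\in\mathcal C_n$). The only bookkeeping to be careful about is that the factor $|Q|^{-1/2}$ coming from the non-scale-invariant Poincaré-type term in the GN inequality becomes $|Q|^{-5/6}$ after the external $|Q|^{-1/3}$, and that the Young exponents $(4/3,4)$ are what force the power $|Q|^{-4/3}$ on the cubic-in-$L^2$ term.
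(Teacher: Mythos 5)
Your proposal is correct and follows essentially the same route as the paper: the scaled Gagliardo--Nirenberg inequality $\|u\|_{L^3(Q)}^3 \les \|\nb u\|_{L^2(Q)}^{3/2}\|u\|_{L^2(Q)}^{3/2}+|Q|^{-1/2}\|u\|_{L^2(Q)}^3$ followed by Young's inequality with exponents $(4/3,4)$ on the cross term, with the $|Q|^{q/3}$ normalizations inserted at the end. The only cosmetic difference is that the paper first applies H\"older in time and then Young's inequality to the time-integrated quantities, whereas you apply Young pointwise in $s$ before integrating; both yield the identical three terms.
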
  \par \par \begin{proof} By the Gagliardo-Nirenberg and H\"older inequalities we have \EQN{ &\frac {1} {|Q|^{1/3}} \int_0^t\int_{Q} |u|^3\,dx\,ds  \\&\indeq \leq C\frac {1} {|Q|^{1/3}} \int_0^t \bigg(  \int_{Q}|u|^2 \,dx \bigg)^{3/4} \bigg(   \int_{Q}|\nb u|^2 \,dx \bigg)^{3/4}\,ds \\&\indeq\indeq + C\frac {1} {|Q|^{1/3}} \int_0^t \frac 1 {|Q|^{1/2}} \bigg( \int_Q |u|^2\,dx \bigg)^{3/2}ds \\&\indeq \leq C\frac {1} {|Q|^{1/3}}  \bigg(  \int_0^t \bigg(  \int_{Q}|u|^2 \,dx \bigg)^{3} \,ds \bigg)^{1/4} \bigg( \int_0^t  \int_{Q}|\nb u|^2 \,dx\,ds \bigg)^{3/4} \\&\indeq\indeq+ C\frac {  |Q|^{q/2}} {|Q|^{5/6}} \int_0^t \bigg( \frac 1 {|Q|^{q/3}} \int_Q |u|^2\,dx \bigg)^{3/2}\,ds. } Using Young's inequality $ab \lec a^4+b^{4/3}$, the last expression is bounded by \EQN{ &\indeq \leq  C \frac {   |Q|^{q/ 4}} {|Q|^{1/3}}  \bigg(  \int_0^t \bigg(  \frac 1 {|Q|^{q/3}} \int_{Q}|u|^2 \,dx \bigg)^{3} \,ds \bigg)^{1/4} \bigg(  \int_0^t  \int_{Q}|\nb u|^2 \,dx\,ds \bigg)^{3/4}  \\&\indeq\indeq  + C    {|Q|^{q/2-5/6}} \int_0^t \bigg( \frac 1 {|Q|^{q/3}} \int_Q |u|^2\,dx \bigg)^{3/2}\,ds \\&\indeq\leq C(\varepsilon ) |Q|^{ q-4/3} \int_0^t\bigg( \frac 1 {|Q|^{q/3}} \int_{Q}|u|^2 \,dx \bigg)^{3} \,ds  +  \varepsilon \int_0^t  \int_{Q}|\nb u|^2 \,dx\,ds \\&\indeq\indeq  + C   {|Q|^{q/2-5/6}} \int_0^t \bigg( \frac 1 {|Q|^{q/3}} \int_Q |u|^2\,dx \bigg)^{3/2}\,ds, } and the proof is concluded. \end{proof} \par \par We study the local energy by way of cutoff functions  $\phi_{Q}$ which we now define.  Let $\phi$ be a radial smooth cutoff function such that $\phi= 1$ in $[-1/2,1/2]^3$ and $\phi = 0$ off of $[-2/3,2/3]^3$ with $\phi$ non-increasing in $|x|$. For $Q\in \mathcal C_n$, let $\phi_Q$ be the translation and dilation of $\phi$ so that $\phi_Q$ equals $1$ on $Q$ and vanishes off of $Q^*$.  Then, $\|\partial^\la \phi_Q(x)\|_{L^\I}\leq C(\lambda) / |Q|^{|\la|/3}$ where $C$ does not depend on $Q$ and $\la$ is any multi-index.   \par \par We need the following version of the Gr\"onwall inequality. \par \begin{lemma}\label{lemma.gronwall} Suppose $f(t) \in L^\infty_\loc([0,T); [0,\infty))$ satisfies, for some $m \ge 1$,   \[ f(t) \le a + \int_0^t( b_1f(s) +b_2 f(s)^m) ds    \comma 0<t<T,   \] where $a,b>0$, then for $T_0=\min(T,T_1)$, with   \begin{equation*}    T_1  =  \frac a { b_1 2a + b_2(2a)^m},   \end{equation*} we have $f(t) \le 2a$ for $t \in (0,T_0)$. \end{lemma}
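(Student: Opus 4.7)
The statement is a standard barrier/continuity argument of Gr\"onwall type: we want to propagate the bound $f(t)\le 2a$ on a time interval short enough that the worst-case linear and power nonlinearities do not yet double the initial value $a$.

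The plan is to introduce the auxiliary function
\[
F(t)=a+\int_0^t \bke{b_1 f(s)+b_2 f(s)^m}\,ds,
\]
which dominates $f$ by hypothesis. Since $f\in L^\infty_\loc$, the integrand is in $L^1_\loc$, so $F$ is continuous (in fact absolutely continuous) on $[0,T)$, nondecreasing, and satisfies $F(0)=a<2a$. I would then define
\[
\tau=\sup\bket{t\in[0,T_0): F(s)\le 2a \text{ for all } s\in[0,t]},
\]
which is strictly positive by continuity of $F$ and the strict inequality $F(0)=a<2a$.

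Next I would show $\tau\ge T_0$ by contradiction. Suppose $\tau<T_0\le T_1$. Then by continuity, $F(\tau)=2a$. On the other hand, for all $s\in[0,\tau]$ we have $f(s)\le F(s)\le 2a$, so plugging this into the definition of $F$ yields
\[
F(\tau)\le a+\int_0^\tau \bke{b_1\cdot 2a+b_2(2a)^m}\,ds=a+\tau\bke{b_1\cdot 2a+b_2(2a)^m}.
\]
Using $\tau<T_1=a/(b_1\cdot 2a+b_2(2a)^m)$, the right-hand side is strictly less than $a+a=2a$, contradicting $F(\tau)=2a$. Hence $\tau\ge T_0$, and therefore $f(t)\le F(t)\le 2a$ for all $t\in(0,T_0)$.

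There is no real obstacle here beyond bookkeeping; the only subtlety is ensuring that the continuity argument is applied to $F$ rather than to $f$ itself, since the hypothesis $f\in L^\infty_\loc$ does not give pointwise continuity of $f$, whereas $F$ is automatically continuous because its integrand is locally integrable. Once the argument is run on $F$, the bound transfers back to $f$ via the assumed inequality $f\le F$.
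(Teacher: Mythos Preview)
Your argument is correct and is precisely the barrier argument the paper alludes to (the paper does not write out a proof, stating only ``The proof is obtained by a barrier argument; cf.~\cite{BT8} for details''). Your care in running the continuity argument on the absolutely continuous majorant $F$ rather than on $f$ itself is exactly the right point to emphasize.
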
  The proof is obtained by a barrier argument; cf.~\cite{BT8}) for details. \par \par Denote \[ \al_{n}(t) =   \| u(t)\|_{  M^{2,2}_{\mathcal C_n}}^2 \] and \[ \be_{n}(t)=\sup_{Q\in \mathcal C_n} \frac 1 {|Q|^{2/3}}\int_0^t\int_{Q}|\nb u(x,s)|^2\,dx\,ds. \] \par \par We now state the bound on the existence times.   It is motivated by an estimate in \cite{JiaSverak-minimal}. \par \begin{theorem}\label{thrm.main2}Assume $u_0\in M^{2,2}_{\mathcal C_n}$,  is divergence-free, and  let $(u,p)$ be a local energy solution with initial data $u_0$ on $\R^3\times (0,\I)$.  Assume additionally that   \EQN{ \esssup_{0<s<t}\al_{n }(s)+\be_{n}(t)<\I, } for all $t<\I$. Then there exist  constants $c_0$ and $c_1$ both independent of $n$ so that,  setting \EQ{ {T_n = \frac {c_1} {  2^{-2n}+ \|u_0\|^4_{M^{2,2}_{\mathcal C_n}}  }, }    \label{EQL5TS2kIQjZKb9QUx2Ui5Aflw1SLDGIuWUdCPjywVVM2ct8cmgOBS7dQViXR8Fbta1mtEFjTO0kowcK2d6MZiW8PrKPI1sXWJNBcREVY4H5QQGHbplPbwdTxpOI5OQZAKyiix7QeyYI91Ea16rKXKL2ifQXQPdPNL607} } we have  \[ \sup_{0<s<T_n }\|u(s)\|_{M^{2,2}_{\mathcal C_n}}^2  + \sup_{Q\in \mathcal C_n} \frac 1 {|Q|^{2/3}}\int_0^{T_n}\int_{Q}|\nb u|^2\,dx\,ds \leq 2c_0 \| u_0\|_{M^{2,2}_{\mathcal C_n}}^2. \] \end{theorem}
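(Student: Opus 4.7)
The plan is to localize the local energy inequality \eqref{CKN-LEI} by testing against $\phi_Q(x)\chi_\de(s)$ for each $Q\in\mathcal C_n$ (with $\chi_\de\to\mathbf 1_{(0,t)}$), bound each of the resulting four pieces (initial datum, $|u|^2\De\phi_Q$, cubic convection, pressure) so that the estimate is in terms of $\al_n$ and $\be_n$ with $n$-independent coefficients, take $\esssup_{0<s<t}$ on the kinetic term and $\sup_{Q\in\mathcal C_n}$ on both sides, and close the resulting integral inequality by a barrier argument in the spirit of Lemma~\ref{lemma.gronwall}.

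After the localization and division by $|Q|^{2/3}$, the initial datum is directly controlled by $\|u_0\|_{M^{2,2}_{\mathcal C_n}}^2$ since $Q^*$ meets only $O(1)$ cubes in $\mathcal C_n$ of size comparable to $Q$. Using $\|\De\phi_Q\|_\I\lec|Q|^{-2/3}$ together with $|Q|\geq 2^{3n}$, the Laplacian term is at most $C2^{-2n}\int_0^t\al_n\,ds$, which is the mechanism generating the $2^{-2n}$ contribution in $T_n$. For the cubic convection, $|\nb\phi_Q|\lec|Q|^{-1/3}$ reduces the term to sums of $\frac{1}{|Q'|}\int_0^t\int_{Q'}|u|^3$ over the adjacent $Q'\in\mathcal C_n$; Lemma~\ref{lemma.cubic} with $q=2$ then produces $C(\varepsilon)\int_0^t\al_n^3\,ds+\varepsilon\be_n(t)+C|Q|^{-1/2}\int_0^t\al_n^{3/2}\,ds$ with $|Q|^{-1/2}\leq 2^{-3n/2}$. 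The pressure term is split by H\"older as $L^{3/2}_{x,s}\times L^3_{x,s}$; the cubic factor is handled as before, while the pressure factor is estimated by Lemma~\ref{lemma.pressure}, whose ``far-field'' prefactor $|Q|^{q/2-5/6}(\log\bka{|Q|^{1/3}/2^n})^{3/2}=|Q|^{1/6}(\log)^{3/2}$ combines with $|Q|^{-2/3}$ from the division and $|Q|^{-1/3}$ from $\nb\phi_Q$ to give $|Q|^{-1/2}(\log\bka{|Q|^{1/3}/2^n})^{3/2}\leq C2^{-3n/2}$ (change variables to $r=|Q|^{1/3}/2^n\ge1$ and use that $r^{-3/2}(\log\bka r)^{3/2}$ is uniformly bounded).

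Setting $F(t):=\esssup_{0<s<t}\al_n(s)+\be_n(t)$, absorbing $\varepsilon\be_n$ into the LHS, and collecting the bounds above yields a closed inequality of the form
\begin{equation*}
F(t) \leq c_0\|u_0\|^2_{M^{2,2}_{\mathcal C_n}}+C\int_0^t\Bigl(2^{-2n}F(s)+2^{-3n/2}F(s)^{3/2}+F(s)^3\Bigr)\,ds.
\end{equation*}
A barrier argument (an adaptation of Lemma~\ref{lemma.gronwall} with two superlinear terms) shows $F(t)\leq 2c_0\|u_0\|^2_{M^{2,2}_{\mathcal C_n}}$ on $[0,T_n]$, where $T_n$ is determined by the condition that the integral term be $\leq c_0\|u_0\|^2$ whenever $F\leq 2c_0\|u_0\|^2$. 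This gives $T_n\sim 1/\bigl(2^{-2n}+2^{-3n/2}\|u_0\|+\|u_0\|^4\bigr)$, and Young's inequality $2^{-3n/2}\|u_0\|\leq \tfrac34\cdot2^{-2n}+\tfrac14\|u_0\|^4$ absorbs the cross term, yielding the stated $T_n=c_1/(2^{-2n}+\|u_0\|^4_{M^{2,2}_{\mathcal C_n}})$.

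The principal obstacle is careful bookkeeping of the cube-dependent prefactors in Lemmas~\ref{lemma.pressure} and~\ref{lemma.cubic} to obtain the correct scaling in $n$: the $(\log\bka{|Q|^{1/3}/2^n})^{3/2}$ from the far-field of the pressure must be absorbed by the polynomial decay $|Q|^{-1/2}$ uniformly in $|Q|$ and $n$, and the intermediate $F^{3/2}$ contribution, whose natural coefficient $2^{-3n/2}$ is larger than the desired $2^{-2n}$, must be packaged into $T_n$ via Young's inequality rather than allowed to dominate.
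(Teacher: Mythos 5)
Your proposal is correct and follows essentially the same route as the paper's proof: localize the energy inequality with $\phi_Q$, invoke Lemma~\ref{lemma.pressure} and Lemma~\ref{lemma.cubic} with $q=2$, divide by $|Q|^{2/3}$ using $|Q|^{1/3}\geq 2^n$ for $Q\in\mathcal C_n$, and close by a barrier argument. The only (cosmetic) difference is that the paper applies Young's inequality pointwise in time to fold the $(\log\bka{|Q|^{1/3}/2^n})^{3/2}|Q|^{-1/2}\al_n^{3/2}$ contribution into $C2^{-2n}\al_n+\al_n^3$ so that Lemma~\ref{lemma.gronwall} applies verbatim with only linear and cubic terms, whereas you carry the $2^{-3n/2}F^{3/2}$ term through a three-term barrier argument and apply Young only at the level of the resulting time scale; both give the stated $T_n$.
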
 \par Let us briefly explain why this bound is useful. In comparison to the analogous estimate in \cite{BK1}, we have control over the time scales via the parameter $n$.  Indeed, if $\|u_0\|_{M^{2,2}_{\mathcal C_n}}\to 0$ as $n\to \I$, then $T_n\to \I$.  This leads to a global solution in the next section. \par \par \par \par \begin{proof}[Proof of Theorem~\ref{thrm.main2}] Fix $Q\in \mathcal C_n$.  The local energy inequality and  the item 5 of Definition~\ref{def:localenergy} give \EQN{ &\frac12\int  {|u(x,t)|^2}  \phi_Q(x)\,dx + \int_0^t\int |\nb u(x,s)|^2\phi_Q(x)\,dx\,ds  \\&\indeq\leq \frac12\int  {|u(x,0)|^2}  \phi_Q(x)\,dx     +\frac12\int_0^t\int  { |u(x,s)|^2}  \Delta \phi_Q (x)\,dx\,ds \\&\indeq\indeq   + \frac12\int_0^t \int \Bigl( {|u(x,s)|^2}                        u\cdot \nb \phi_Q(x) 		     +  2  (p(x,s) - p_Q(s)) u(x,s)\cdot\nb \phi_Q(x)                                     \Bigr)\,dx\,ds. } For the linear term we have, \EQN{ &\frac12\int_0^t \int  |u|^2  \Delta \phi_Q \,dx\,ds  \leq  \frac {C  } {|Q|^{ 2/3}} \int_0^t\sup_{Q'\cap Q^*\neq \emptyset} \int_{Q'} {|u|^2}  \,dx \,ds \\&\indeq \leq C  \int_0^t  \sup_{Q'\cap Q^*\neq \emptyset} \frac 1 {|Q'|^{q/3}} \frac { |Q'|^{q/3} }{|Q|^{ 2/3}} \int_{Q'} {|u|^2}  \,dx \,ds \leq C\,|Q|^{(q-2)/3}\int_0^t     \al_{n}(s)  \,ds, } where we used   the fact that $|Q'|$ and $|Q|$ are comparable.   \par The remaining terms are bounded using Lemmas~\ref{lemma.pressure} and~\ref{lemma.cubic}.  This leads to  \EQ{\label{ineq.3.7} & \int  {|u(x,t)|^2}  \phi_Q(x)\,dx +2 \int_0^t\int |\nb u(x,s)|^2\phi_Q(x)\,dx\,ds   \\&\indeq \leq \int  {|u(x,0)|^2}  \phi_Q(x)\,dx  +   C|Q|^{(q-2)/3}\int_0^t  \al_n(s)\,ds  \\& \indeq\indeq + C (\log \bka{|Q|^{1/3}/2^n})^{3/2}  |Q|^{q/2-5/6} \int_0^t \al_n(s)^{3/2} \,ds  \\& \indeq\indeq +C |Q|^{ q-4/3}   \int_0^t \al_n(s)^{3} \,ds   \\&\indeq\indeq + \varepsilon \sup_{Q'\in \mathcal C_n;Q'\cap Q^{**}\neq \emptyset} |Q'|^{q/2-1/3} \frac 1 { |Q'|^{q/3}}\int_0^t  \int_{Q'}|\nb u|^2 \,dx\,ds, }  where $\varepsilon>0$ is fixed (with the choice explained after \eqref{ineq.3.10} below); the constants between \eqref{ineq.3.7} and \eqref{ineq.3.10} depend on $\varepsilon$. Next, plug in $q=2$ and divide through  by $|Q|^{2/3}$. Then, by Young's inequality,  \EQN{ &C (\log \bka{|Q|^{1/3}/2^n})^{3/2} |Q|^{q/2-5/6-2/3} \int_0^t \al_n(s)^{3/2} \,ds  \\&\indeq \leq C  (\log \bka{|Q|^{1/3}/2^n})^{2} |Q|^{(1-5/6-2/3)4/3} \int_0^t  \al_n(s)\,ds    + \int_0^t\al_n(s)^{3} \,ds  \\&\indeq = C \frac {(\log \bka{|Q|^{1/3}/2^n})^2} {(|Q|^{1/3} /2^n)^22^{2n}}  \int_0^t \al_n(s)\,ds + \int_0^t\al_n(s)^{3} \,ds. } For $Q\in \mathcal C_n$, \[  \frac {(\log \bka{|Q|^{1/3}/2^n})^2} {(|Q|^{1/3} /2^n)^2 } \] is bounded and the bound is \labell{cuyDlLjbU3F6vZkGbaKaMufjuxpn4Mi457MoLNW3eImcj6OOSe59afAhglt9SBOiFcYQipj5uN19NKZ5Czc231wxGx1utgJB4ueMxx5lrs8gVbZs1NEfI02RbpkfEOZE4eseo9teNRUAinujfeJYaEhns0Y6XRUF1PCf5eEAL9DL6a2vmBAU5AuDDtyQN5YLLWwPWGjMt4hu4FIoLCZLxeBVY5lZDCD5YyBwOIJeHVQsKobYdqfCX1tomCbEj5m1pNx9pnLn5A3g7Uv777YUgBRlNrTyjshaqBZXeAFtjyFlWjfc57t2fabx5Ns4dclCMJcTlqkfquFDiSdDPeX6mYLQzJzUmH043MlgFedNmXQPjAoba07MYwBaC4CnjI4dwKCZPO9wx3en8AoqX7JjN8KlqjQ5cbMSdhRFstQ8Qr2ve2HT0uO5WjTAiiIWn1CWrU1BHBMvJ3ywmAdqNDLY8lbxXMx0DDvco3RL9Qz5eqywVYqENnO8MH0PYzeVNi3yb2msNYYWzG2DCPoG1VbBxe9oZGcTU3AZuEKbkp6rNeTX0DSMczd91nbSVDKEkVazIqNKUQapNBP5B32EyprwPFLvuPiwRPl1GTdQBZEAw3d90v8P5CPAnX4Yo2q7syr5BW8HcT7tMiohaBW9U4qrbumEQ6XzMKR2BREFXk3ZOMVMYSw9SF5ekq0myNKGnH0qivlRA18CbEzidOiuyZZ6kRooJkLQ0EwmzsKlld6KrKJmRxls12KG2bv8vLxfJwrIcU6Hxpq6pFy7OimmodXYtKt0VVH22OCAjfdeTBAPvPloKQzLEOQlqdpzxJ6JIzUjnTqYsQ4BDQPW6784xNUfsk0aM78qzMuL9MrAcuVVKY55nM7WqnB2RCpGZvHhWUNg93F2eRT8UumC62VH3ZdJXLMScca1mxoOO6oOLOVzfpOBOX5EvKuLz5sEW8a9yotqkcKbDJNUslpYMJpJjOWUy2U4YVKH6kVC1Vx1uvykOyDszo5bzd36qWH1kJ7JtkgV1JxqrFnqmcUyZJTp9oFIcFAk0ITA93SrLaxO9oUZ3jG6fBRL1iZ7ZE6zj8G3MHu86Ayjt3flYcmTkjiTSYvCFtJLqcJPtN7E3POqGOKe03K3WV0epWXDQC97YSbADZUNp81GFfCPbj3iqEt0ENXypLvfoIz6zoFoF9lkIunXjYyYL52UbRBjxkQUSU9mmXtzIHOCz1KH49ez6PzqWF223C0Iz3CsvuTR9sVtQCcM1eopDPy2lEEzLU0USJtJb9zgyGyfiQ4foCx26k4jLE0ula6aSIrZQHER5HVCEBL55WCtB2LCmveTDzVcp7URgI7QuFbFw9VTxJwGrzsVWM9sMJeJNd2VGGFsiWuqC3YxXoJGKwIo71fgsGm0PYFBzX8eX7pf9GJb1oXUs1q06KPLsMucNytQbL0Z0Qqm1lSPj9MTetkL6KfsC6ZobYhc2quXy9GPmZYj1GoeifeJ3pRAfn6Ypy6jNs4Y5nSEpqN4mRmamAGfYHhSaBrLsDTHCSElUyRMh66XU7hNzpZVC5VnV7VjL7kvWKf7P5hj6t1vugkLGdNX8bgOXHWm6W4YEmxFG4WaNEbGKsv0p4OG0NrduTeZaxNXqV4BpmOdXIq9abPeDPbUZ4NXtohbYegCfxBNttEwcDYSD637jJ2ms6Ta1J2xZPtKnPwAXAtJARc8n5d93TZi7q6WonEDLwWSzeSueYFX8cMhmY6is15pXaOYBbVfSChaLkBRKs6UOqG4jDVabfbdtnyfiDBFI7uhB39FJ6mYrCUUTf2X38J43KyZg87igFR5Rz1t3jH9xlOg1h7P7Ww8wjMJqH3l5J5wU8eH0OogRCvL7fJJg1ugRfMXIGSuEEfbh3hdNY3x197jRqePcdusbfkuJhEpwMvNBZVzLuqxJ9b1BTfYkRJLjOo1aEPIXvZAjvXnefhKGsJGawqjtU7r6MPoydEH26203mGiJhFnTNCDBYlnPoKO6PuXU3uu9mSg41vmakk0EWUpSUtGBtDe6dKdxZNTFuTi1fMcMhq7POvf0hgHl8fqvI3RK39fn9MaCZgow6e1iXjKC5lHOlpGpkKXdDxtz0HxEfSMjXYL8Fvh7dmJkE8QAKDo1FqMLHOZ2iL9iIm3LKvaYiNK9sb48NxwYNR0nx2t5bWCkx2a31ka8fUIaRGzr7oigRX5sm9PQ7Sr5StZEYmp8VIWShdzgDI9vRF5J81x33nNefjBTVvGPvGsxQhAlGFbe1bQi6JapOJJaceGq1vvb8rF2F3M68eDlzGtXtVm5y14vmwIXa2OGYhxUsXJ0qgl5ZGAtHPZdoDWrSbBSuNKi6KWgr39s9tc7WM4Aws1PzI5cCO7Z8y9lMTLAdwhzMxz9hjlWHjbJ5CqMjhty9lMn4rc76AmkKJimvH9rOtbctCKrsiB04cFVDl1gcvfWh65nxy9ZS4WPyoQByr3vfBkjTZKtEZ7rUfdMicdyCVqnD036HJWMtYfL9fyXxO7mIcFE1OuLQsAQNfWv6kV8Im7Q6GsXNCV0YPoCjnWn6L25qUMTe71vahnHDAoXAbTczhPcfjrjW5M5G0nzNM5TnlJWOPLhM6U2ZFxwpg4NejP8UQ09JX9n7SkEWixERwgyFvttzp4Asv5FTnnMzLVhFUn56tFYCxZ1BzQ3ETfDlCad7VfoMwPmngrDHPfZV0aYkOjrZUw799etoYuBMIC4ovEY8DOLNURVQ5lti1iSNZAdwWr6Q8oPFfae5lAR9gDRSiHOeJOWwxLv20GoMt2Hz7YcalyPZxeRuFM07gaV9UIz7S43k5TrZiDMt7pENCYiuHL7gac7GqyN6Z1ux56YZh2dyJVx9MeUOMWBQfl0EmIc5Zryfy3irahCy9PiMJ7ofoOpdennsLixZxJtCjC9M71vO0fxiR51mFIBQRo1oWIq3gDPstD2ntfoX7YUoS5kGuVIGMcfHZe37ZoGA1dDmkXO2KYRLpJjIIomM6Nuu8O0jO5NabUbRnZn15khG94S21V4Ip457ooaiPu2jhIzosWFDuO5HdGrdjvvtTLBjovLLiCo6L5LwaPmvD6Zpal69Ljn11reT2CPmvjrL3xHmDYKuv5TnpC1fMoURRToLoilk0FEghakm5M9cOIPdQlGDLnXerCykJC10FHhvvnYaTGuqUrfTQPvwEqiHOvOhD6AnXuvGlzVAvpzdOk36ymyUoFbAcAABItOes52Vqd0Yc7U2gBt0WfFVQZhrJHrlBLdCx8IodWpAlDS8CHBrNLzxWp6ypjuwWmgXtoy1vPbrauHyMNbkUrZD6Ee2fzIDtkZEtiLmgre1woDjuLBBSdasYVcFUhyViCxB15yLtqlqoUhgL3bZNYVkorzwa3650qWhF22epiXcAjA4ZV4bcXxuB3NQNp0GxW2Vs1zjtqe2pLEBiS30E0NKHgYN50vXaK6pNpwdBX2Yv7V0UddTcPidRNNCLG47Fc3PLBxK3Bex1XzyXcj0Z6aJk0HKuQnwdDhPQ1QrwA05v9c3pnzttztx2IirWCZBoS5xlOKCiD3WFh4dvCLQANAQJGgyvODNTDFKjMc0RJPm4HUSQkLnTQ4Y6CCMvNjARZblir7RFsINzHiJlcgfxSCHtsZOG1VuOzk5G1CLtmRYIeD35BBuxZJdYLOCwS9lokSNasDLj5h8yniu7hu3cdizYh1PdwEl3m8XtyXQRCAbweaLiN8qA9N6DREwy6gZexsA4fGEKHKQPPPKMbksY1jM4h3JjgSUOnep1wRqNGAgrL4c18Wv4kchDgRx7GjjIBzcKQVf7gATrZxOy6FF7y93iuuAQt9TKRxS5GOTFGx4Xx1U3R4s7U1mpabpDHgkicxaCjkhnobr0p4codyxTCkVj8tW4iP2OhTRF6kU2k2ooZJFsqY4BFSNI3uW2fjOMFf7xJveilbUVTArCTvqWLivbRpg2wpAJOnlRUEPKhj9hdGM0MigcqQwkyunBJrTLDcPgnOSCHOsSgQsR35MB7BgkPk6nJh01PCxdDsw514O648VD8iJ54FW6rs6SyqGzMKfXopoe4eo52UNB4Q8f8NUz8u2nGOAXHWgKtGAtGGJsbmz2qjvSvGBu5e4JgLAqrmgMmS08ZFsxQm28M3z4Ho1xxjj8UkbMbm8M0cLPL5TS2kIQjZKb9QUx2Ui5Aflw1SLDGIuWUdCPjywVVM2ct8cmgOBS7dQViXR8Fbta1mtEFjTO0kowcK2d6MZiW8PrKPI1sXWJNBcREVY4H5QQGHbplPbwdTxpOI5OQZAKyiix7QeyYI91Ea16rKXKL2ifQXQPdPNL6EJiHcKrBs2qGtQbaqedOjLixjGiNWr1PbYSZeSxxFinaK9EkiCHV2a13f7G3G3oDKK0ibKVy453E2nFQS8Hnqg0E32ADddEVnmJ7HBc1t2K2ihCzZuy9kpsHn8KouARkvsHKPy8YodOOqBihF1Z3CvUFhmjgBmuZq7ggWLg5dQB1kpFxkk35GFodk00YD13qIqqbLwyQCcyZRwHAfp79oimtCc5CV8cEuwUw7k8Q7nCqWkMgYrtVRIySMtZUGCHXV9mr9GHZol0VEeIjQvwgw17pDhXJSFUcYbqUgnGV8IFWbS1GXaz0ZTt81w7EnIhFF72v2PkWOXlkrw6IPu5679vcW1f6z99lM2LI1Y6Naaxfl18gT0gD} independent of $n$. Hence, \EQN{ &C  (\log \bka{|Q|^{1/3}/2^n})^{3/2} |Q|^{q/2-5/6-2/3} \int_0^t \al_n(s)^{3/2} \,ds \\&\indeq   \leq  C 2^{-2n}    \int_0^t  \al_n(s) \,ds + \int_0^t \al_n(s)^{3} \,ds. } Using this in \eqref{ineq.3.7} divided by $|Q|^{2/3}$ leads to \EQ{\label{ineq.3.10} &\frac 1 {|Q|^{2/3}} \int  {|u(x,t)|^2}  \phi_Q(x)\,dx + \frac 2 {|Q|^{2/3}} \int_0^t\int |\nb u(x,s)|^2\phi_Q(x)\,dx\,ds   \\&\indeq \leq\frac 1 {|Q|^{2/3}}  \int  {|u(x,0)|^2}  \phi_Q(x)\,dx  +    \frac C {2^{2n}}\,\int_0^t \al_n(s)\,ds   +C  \int_0^t\al_n(s)^{3} \,ds   \\&\indeq\indeq +  \sup_{Q'\in \mathcal C_n;Q'\cap Q^{**}\neq \emptyset} \frac 1 { |Q'|^{2/3}}\int_0^t  \int_{Q'}|\nb u|^2 \,dx\,ds, } where, to obtain the last line, $\varepsilon$ is chosen to be small enough so that $\varepsilon |Q'|^{2/3}  / |Q|^{2/3} \leq 1$ whenever $Q'\cap Q^{**}\neq \emptyset$. Such $\varepsilon$ can be chosen independently of $Q$ and $n$.  Taking the supremum over  $Q\in \mathcal C_n$ and absorbing the gradient term in the left hand side gives \EQN{ \| u(t)\|_{M^{2,2}_{\mathcal C_n}}^2 \leq  c_0 \| u_0\|_{M^{2,2}_{\mathcal C_n}}^2   + C  {2^{-2n}}  \int_0^t \| u(s)\|_{M^{2,2}_{\mathcal C_n}}^2\,ds + C   \int_0^t \| u(s)\|_{M^{2,2}_{\mathcal C_n}}^6\,ds, } where $c_0$ is a fixed constant depending only on $\phi$ chosen to ensure \[ \sup_{Q\in \mathcal C_n} \frac 1 {|Q|^{2/3}} \int |u_0|^2\phi_Q \,dx \leq c_0  \| u_0 \|_{M^{2,2}_{\mathcal C_n}}^2. \] By Lemma~\ref{lemma.gronwall} we obtain \[ \| u(t)\|_{M^{2,2}_{\mathcal C_n}}^2 \leq 2c_0 \|u_0\|_{M^{2,2}_{\mathcal C_n}}^2, \] provided   \begin{equation}    t\leq \frac {C   } {     2^{-2n} + \|u_0\|_{M^{2,2}_{\mathcal C_n}}^4}       .    \notag     \end{equation} Thus letting $c_1$ be the constant from the above inequality completes the proof. \end{proof} \par  \begin{remark}\label{remark.1} Theorem~\ref{thrm.main2} can be re-formulated for $M^{2,q}_{\mathcal C_n}$ for $0\leq q<2$. To prove this, just start at \eqref{ineq.3.7} and proceed \labell{xGc3tvKLXaC1dKgw9H3o2kEoulIn9TSPyL2HXO7tSZse01Z9HdslDq0tmSOAVqtA1FQzEMKSbakznw839wnH1DpCjGIk5X3B6S6UI7HIgAaf9EV33Bkkuo3FyEi8Ty2ABPYzSWjPj5tYZETYzg6Ix5tATPMdlGke} with a different choice of $q$. Doing so for $q=1$ leads to the estimate \EQ{ \notag \sup_{0<s<\bar T_n }\|u(s)\|_{M^{2,1}_{\mathcal C_n}}^2  + \sup_{Q\in \mathcal C_n} \frac 1 {|Q|^{1/3}}\int_0^{\bar T_n}\int_{Q}|\nb u|^2\,dx\,ds \leq 2c_0 \| u_0\|_{M^{2,1}_{\mathcal C_n}}^2, } where  \EQ{ \bar T_n \sim \frac  1 {2^{-2n} + 2^{-2n} \|u_0\|_{M^{2,1}_{\mathcal C_n}}^4}.    \label{EQL5TS2kIQjZKb9QUx2Ui5Aflw1SLDGIuWUdCPjywVVM2ct8cmgOBS7dQViXR8Fbta1mtEFjTO0kowcK2d6MZiW8PrKPI1sXWJNBcREVY4H5QQGHbplPbwdTxpOI5OQZAKyiix7QeyYI91Ea16rKXKL2ifQXQPdPNL606} } If additionally $u_0\in \mathring M^{2,1}_{\mathcal C}$, then $\|u_0\|_{M^{2,1}_{\mathcal C_n}}\to 0$ as $n\to \I$ and we may take  $\bar T_n \sim 2^{2n} $ for large $n$. \end{remark} \par \section{Construction of global solutions}\label{sec.approximation} \par Recall that the initial data can be approximated by the following lemma proven in \cite{BK1}. The second conclusion is new and follows from observations in Section~\ref{sec.data}. \begin{lemma}\label{lemma.approx} Assume $f\in \MC$ is divergence-free.  For every $\e>0$ there exists  a divergence-free $g\in L^2$ such that $\| f-g\|_{\MM} \leq\e$ and $\| f-g\|_{M^{2,2}_{\mathcal C_n}}<\e$ for every $n\in \N$ ($g$ is independent of $n$). \end{lemma}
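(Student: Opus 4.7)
The first conclusion, namely the existence of a divergence-free $g \in L^2(\R^3)$ with $\| f - g \|_{\MM} \le \varepsilon$, is exactly the content of the approximation lemma from \cite{BK1}, so the only new point to establish is that the same $g$ also satisfies $\| f - g \|_{M^{2,2}_{\mathcal C_n}} < \varepsilon$ simultaneously for every $n \in \N$. My plan is to deduce this from a universal comparison of norms: I claim that there is a constant $C$, independent of $n$, such that
\[ \| h \|_{M^{2,2}_{\mathcal C_n}} \le C \, \| h \|_{\MM} \]
for every $h \in M^{2,2}_{\mathcal C}$. Once this is proved, the second conclusion of the lemma follows immediately by invoking the \cite{BK1} construction with $\varepsilon/C$ in place of $\varepsilon$.

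To verify the uniform comparison, fix $n$ and $Q \in \mathcal C_n$. By definition of $\mathcal C_n$, either $Q \in S_k$ for some $k \ge n$, in which case $Q \in \mathcal C$ and the ratio $|Q|^{-2/3} \int_Q |h|^2$ is bounded directly by $\| h \|_{\MM}^2$; or $Q = Q_{n-1}$, in which case I decompose
\[ Q_{n-1} = \bigcup_{k \le n-1} \; \bigcup_{Q' \in S_k} Q' \]
into the disjoint union of its $\mathcal C$-subcubes. Using $\#S_k \le 64$ and $|Q'|^{2/3} = 2^{2k}$ for each $Q' \in S_k$, I get
\[ \int_{Q_{n-1}} |h|^2 \le \| h \|_{\MM}^2 \sum_{k=0}^{n-1} (\#S_k)\, 2^{2k} \le C\, 2^{2n}, \]
and since $|Q_{n-1}|^{2/3}$ is itself comparable to $2^{2n}$, dividing gives $|Q_{n-1}|^{-2/3} \int_{Q_{n-1}} |h|^2 \le C \|h\|_{\MM}^2$. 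Taking the supremum over $Q \in \mathcal C_n$ produces the desired inequality.

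The key point which makes everything work is the linear-in-$n$ growth of the count of cubes in $\bigcup_{k<n} S_k$ combined with the geometric factor $2^{2k}$, which fits exactly inside the scaling weight $|Q_{n-1}|^{2/3} \sim 2^{2n}$ with a uniform constant. I do not foresee any genuine obstacle here; the argument is a purely structural consequence of the cover $\mathcal C$ and requires no further analytic input beyond the $L^2$ approximation already supplied by \cite{BK1}. The fact that $g$ is automatically independent of $n$ is built into this strategy, since $g$ is chosen once (from \cite{BK1}) before the comparison is invoked.
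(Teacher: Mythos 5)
Your proposal is correct and is essentially the paper's own argument: the paper gives no separate proof, citing \cite{BK1} for the first conclusion and noting that the second ``follows from observations in Section~\ref{sec.data}'', and those observations are precisely the uniform (in $n$) bound $\sum_{k\le n-1}(\#S_k)\,2^{2k}\le C\,2^{2n}\sim |Q_{n-1}|^{2/3}$ that appears in the proof of Lemma~\ref{lemma.init} and that you use to establish $\norm{h}_{M^{2,2}_{\mathcal C_n}}\le C\norm{h}_{\MM}$ with $C$ independent of $n$. Applying the \cite{BK1} approximation with $\e/C$ then yields both conclusions for a single $g$, exactly as you describe.
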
 \par Our proof follows the global existence argument in \cite{BT8} very closely. The basic elements of this argument were first written down in  \cite{LR} and later elaborated in \cite{KiSe}.   See also \cite{LR-Morrey, KwTs,LR2,MaMiPr}. \par \begin{proof}  \par We argue by induction.   For $n\in \N$, let $u_0^n$ be as in Lemma~\ref{lemma.approx} where $\|u_0 - u_0^n\|_{M^{2,2}_\mathcal C}<\frac 1 n$ and $\|u_0 - u_0^n\|_{M^{2,2}_{\mathcal C_k}}<1/n$ for all $k \in \N$. Let $u^n$ and $\td p^n$ be a global solution in the Leray class with $u^n(0)=u_0^n$. By the classical theory (see \cite{Tsai} for a reference) we may furthermore assume that $u^n$ is suitable and $\td p^n$ satisfies the local pressure expansion.   Let $T_n = \inf_{j\geq n} \bar T_j$ where $\bar T_j$ is the time-scale from Theorem~\ref{thrm.main2}.   This sequence is non-decreasing and $T_n\to \I$ as $n\to \I$. Let $B_n$ denote the ball centered at the origin of radius $n$.  Then, Theorem~\ref{thrm.main2} implies that $u^n$ are uniformly bounded in the class from inequalities \cite[(4.1)--(4.4)]{KiSe} on $B_1\times [0,T_1]$.  Hence, there exists a sub-sequence  $u^{1,k}$ that converges to a vector field $u_1$  on $B_1\times (0,T_1)$ in the following sense \begin{align*} &u^{1,k}\overset{\ast}\rightharpoonup u_1\quad \text{in }L^\I(0,T_1;L^2(B_1)) \\&u^{1,k}  \rightharpoonup u_1\quad \text{in }L^2(0,T_1;H^1(B_1)) \\&u^{1,k} \to u_1  \quad \text{in }L^3(0,T_1;L^3(B_1)). \end{align*} \par By Theorem~\ref{thrm.main2}, all $u^{1,k}$ are also uniformly bounded on $B_n\times [0,T_n]$ for $n\in \N$, $n \ge2$.  Therefore, we can inductively extract  subsequences $\{u^{n,k}\}_{k \in \N}$ from $\{u^{n-1,k}\}_{k \in \N}$ which converge to  a vector field $u_n$ on $B_n\times (0,T_n)$ as $k \to \infty$ in the following sense \begin{align*} &u^{n,k} \overset{\ast}\rightharpoonup u_n\quad \text{in }L^\I(0,T_n;L^2(B_n)) \\&u^{n,k}  \rightharpoonup u_n\quad \text{in }L^2(0,T_n;H^1(B_n)) \\&u^{n,k} \to u_n \quad \text{in }L^3(0,T_n;L^3(B_n)). \end{align*} Let $\tilde u_n$ be the extension by $0$ of $u_n$ to $\R^3\times (0,\I)$. Note that, at each step, $\tilde u_n$ agrees with $\tilde u_{n-1}$ on $B_{n-1}\times (0, T_{n-1})$.  Let $u=\lim_{n\to \I}\tilde u_n$. Then, $u=u_n$ on $B_n\times (0,T_n)$ for every $n\in \N$. \par \par Let $u^{(k) }= u^{k,k}$ on $B_k\times (0,T_k)$ and equal $0$ elsewhere.    Then, for every fixed $n$ and as $k \to \infty$, \EQN{ &u^{(k) } \overset{\ast}\rightharpoonup u\quad \text{in }L^\I(0,T_n;L^2(B_n)) \\&u^{(k) }  \rightharpoonup u\quad \text{in }L^2(0,T_n;H^1(B_n)) \\&u^{(k) } \to u \quad \text{in }L^3(0,T_n;L^3(B_n)) . } \par \par Based on the uniform bounds for the approximates, we have that $u$ satisfies \EQN{ &\sup_{0<t\leq T_n} \sup_{x_0\in \R^3} \int_{B_n(x_0)}|u(x,t)|^2\,dx  \\&\indeq + \sup_{x_0\in \R^3}\int_0^{T_n} \int_{B_n(x_0)} |\nb u(x,t)|^2\,dx\,ds \leq  C \sup_{x_0\in \R^3} \int_{B_n(x_0)} |u_0|^2\,dx. } \par  The pressure is dealt with as in \cite[\S3]{KwTs}. Let \EQN{ p^{(k) } (x,t) =& -\frac 1 3  |u^{(k) }|^2 (x,t) + \pv \int_{B_2} K_{ij}(x-y)  (u^{(k) }_i   \, u^{(k) }_j )(y,t) \,dy   \\&+ \pv \int_{B_2^c} (K_{ij}(x-y)-K_{ij}(-y))   (u^{(k) }_i  \, u^{(k) }_j )(y,t) \,dy,  } which differs from the pressure   associated to $u^{(k) }$  by a function of $t$ which is constant in $x$, and so $u^{(k) }$ with the above pressure $p^{(k) }$ is also a distributional solution to \eqref{eq.NSE}. \par Since $u^{(k)}$ converges to $u$ in the above sense, it follows that $p^{(k) }\to p$ in $L^{3/2}(0,T_m;L^{3/2} (B_m) )$ for all $m$ and $p$   given by \EQN{ p(x,t)  = \lim_{m \to \infty} \bar p^m(x,t) , } where $\bar p^m(x,t)$ is defined on $B_{2^m}(0)$   by  \EQN{ \bar p^m(x,t)  =&-\frac 13 |u(x,t)|^2  +\pv \int_{B_2}  K_{ij}(x-y) u_i u_j(y,t) dy + \bar p^m_3+\bar p^m_4, } with \EQN{ \bar p^m_3(x,t) &=\pv \int_{B_{2^{m+1}}\setminus B_2}  (K_{ij}(x-y)-K_{ij}(-y)) u_i u_j(y,t)\,  dy , \\ \bar p^m_4(x,t) &=\int_{B_{2^{m+1}}^c}  (K_{ij}(x-y)-K_{ij}(-y)) u_i u_j(y,t) \, dy. }  To prove this one adapts the argument concerning the convergence of the pressure in \cite[Proof of Theorem~1.3]{BK1}.  \par \par We now establish the local pressure expansion. Following the analogous argument in \cite{BK1}, it is possible to \labell{67Xb7FktEszyFycmVhGJZ29aPgzkYj4cErHCdP7XFHUO9zoy4AZaiSROpIn0tp7kZzUVHQtm3ip3xEd41By72uxIiY8BCLbOYGoLDwpjuza6iPakZdhaD3xSXyjpdOwoqQqJl6RFglOtX67nm7s1lZJmGUrdIdXQ} show that \EQ{\label{convergence} G_{ij}^Q(u_i^{(k)} u_i^{(k)} ) \to G_{ij}^Q (u_i u_j)\quad \text{ in }L^{3/2}(0,T_;L^{3/2}(Q) ), } for any cube  $Q$ and time $T>0$. Fix a cube $Q$  and $T>0$. Using \eqref{convergence} and taking the limit of the weak form of \eqref{eq.NSE} we find that the pair $ (u, G_{ij}^Q (u_i u_j)) $ solves \eqref{eq.NSE} in $Q\times (0,T)$. Hence, $\nb p = \nb  G_{ij}^Q (u_i u_j))$ in $\mathcal D'(\R^3)$ at every time $t$ and so there exists a constant $p_Q(t)$ so that  \[ p (x,t) = G_{ij}^Q (u_i u_j))(x,t) + p_Q(t), \] for $x\in Q$.  Clearly we have $p_Q\in L^{3/2}(0,T)$. This gives the desired local pressure expansion. \par At this point we have established the items 1, 2, and 6 from the definition of local energy solutions. The remaining items follow from the arguments  in \cite{BK1} (see also \cite[pp.~156--158]{KiSe} and \cite[\S3]{KwTs}). This is because for any time $T_0$, we have the same convergences of $u^k$ and $p^k$ on $B_n\times T_0$ for all $n\in \N$ as in \cite{BK1}; cf.~also \cite{BT8}.   \end{proof} \par \section{Eventual regularity}\label{sec.er} \par In this section we use Remark~\ref{remark.1} to prove Theorem~\ref{thrm.er},  but first recall a variant of the Caffarelli-Kohn-Nirenberg regularity  criteria \cite{CKN} due to Lin \cite{L98}; see also \cite{LS99,Ku}. \par   \begin{lemma}[$\e$-regularity criteria]\label{thrm.epsilonreg}   For any $\si\in (0,1)$,  there exists a universal constant $\e_*=\e_*(\si)>0$ such that, if a pair $(u,p)$ is a suitable weak solutions of \eqref{eq.NSE} in   $Z_r=Z_r(x_0,t_0)=B_r(x_0)\times (t_0-r^2,t_0)$,  and \[ \e^3=\frac 1 {r^2} \int_{Z_r} (|u|^3 +|p|^{3/2})\,dx\,dt <\e_*, \] then $u\in L^\I(Z_{\si r})$. Moreover, \[ \|  \nabla^k u\|_{L^\I(Z_{\si r})} \leq C_k {\e}\, r^{-k-1}    \comma k\in {\mathbb N}_0 \] for universal constants $C_k=C_k(\si)$. \end{lemma}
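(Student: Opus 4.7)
The plan is to follow the classical compactness--contradiction approach pioneered by Lin for Caffarelli--Kohn--Nirenberg type $\e$-regularity lemmas.  By the scaling invariance of Navier--Stokes, $u(x,t) \mapsto \la u(\la x, \la^2 t)$ and $p(x,t) \mapsto \la^2 p(\la x, \la^2 t)$, I first reduce to the case $r = 1$, $x_0 = 0$, $t_0=0$; the quantity $\e^3 = r^{-2}\int_{Z_r}(|u|^3+|p|^{3/2})$ is scale invariant, and the exponents $r^{-k-1}$ in the gradient estimates are dictated by this rescaling.

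Assume for contradiction that the lemma fails for a fixed $\si \in (0,1)$: there is a sequence of suitable weak solutions $(u^k,p^k)$ in $Z_1$ with
\[
\e_k^3 := \int_{Z_1}(|u^k|^3 + |p^k|^{3/2})\,dx\,dt \to 0,
\]
while the desired $L^\I$ bound on $Z_\si$ fails.  Apply the local energy inequality \eqref{CKN-LEI} with a cutoff supported in $Z_1$ and equal to one on an intermediate cylinder $Z_\rho$, $\si<\rho<1$.  The right-hand side is controlled using H\"older inequality by $\int |u^k|^3$ and $\int |p^k|^{3/2}$, so after absorption one obtains
\[
\sup_{t \in (-\rho^2,0)} \int_{B_\rho} |u^k|^2 + \int_{Z_\rho} |\nb u^k|^2 \les \e_k^2.
\]

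Next, rescale $v^k := u^k/\e_k$ and $q^k := p^k/\e_k$.  The sequence $\{v^k\}$ is bounded in $L^\I_t L^2_x \cap L^2_t H^1_x$ on $Z_\rho$, and the nonlinear term $u^k \cdot \nb u^k = \e_k\, v^k \cdot \nb v^k$ becomes an $O(\e_k)$ forcing.  Extract a subsequence converging weakly-$*$ in $L^\I_t L^2_x$, weakly in $L^2_t H^1_x$, and strongly in $L^3(Z_{\rho'})$ for $\rho'<\rho$ (Aubin--Lions).  Pass to the limit: the limit $v$ solves a linear non-stationary Stokes system with vanishing forcing on $Z_{\rho'}$ and hence is smooth, with quantitative interior bounds.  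Comparing $v^k$ with this smooth limit on $Z_\si$ contradicts the presumed failure of the $L^\I$ bound, proving the first assertion.  The higher-order derivative estimates $\|\nb^k u\|_{L^\I(Z_{\si r})} \le C_k \e\, r^{-k-1}$ then follow by bootstrapping parabolic interior regularity for the Stokes system once $u$ is known to be bounded, together with the scaling reduction from the first step.

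The main obstacle will be the non-local pressure: the rescaled $q^k$ need not decompose cleanly in terms of the local velocity in $Z_1$ alone.  A near/far split analogous to the one used in Lemma~\ref{lemma.pressure} is required, so that the near-field part is controlled by Calder\'on--Zygmund in terms of $|u^k|^2$, while the far-field part, after subtracting a function of $t$ alone (which does not affect $\nb p$), converges in a topology compatible with the convergence of $v^k$.  Once this pressure decomposition is set up, the compactness step and the passage to the linear limit are standard.
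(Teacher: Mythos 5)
The paper does not actually prove this lemma: it invokes the literature (\cite{CKN}, \cite{L98}, \cite{LS99}, \cite{Ku}) and only remarks that the general $\si\in(0,1)$ version follows from the usual $\si=1/2$ version by a partition/covering argument. Your proposal reconstructs Lin's compactness proof, which is the right reference point, and your energy estimate $\sup_t\int_{B_\rho}|u^k|^2+\int_{Z_\rho}|\nb u^k|^2\les \e_k^2$ from \eqref{CKN-LEI} is correct. But the contradiction step as you state it would fail. Strong convergence $v^k\to v$ in $L^3(Z_{\rho'})$ together with smoothness of the caloric limit $v$ gives no control on $\|v^k\|_{L^\I(Z_\si)}$: a spike of height $k$ on a set of measure $k^{-4}$ has negligible $L^3$ mass, and a suitable weak solution is a priori only in $L^\I_tL^2_x\cap L^2_tH^1_x$, so it need not lie in $L^\I$ at all --- there is no finite quantity to ``compare'' with the limit. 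The conclusion you are trying to contradict simply does not pass through the topologies in which you have compactness.

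The standard repair, which is the actual content of \cite{L98}, is to aim the compactness argument at a \emph{scale-invariant integral} quantity rather than at $L^\I$: one proves a one-step decay estimate, e.g.\ that for a suitable fixed $\th\in(0,1/2)$, smallness of $A(1)=\bigl(\int_{Z_1}|u|^3+|p|^{3/2}\bigr)$ forces $\th^{-2}A(\th)\le \frac12 A(1)$ (suitably normalized). The contradiction now closes, because the interior decay of the smooth caloric limit \emph{is} detected by the $L^3$ norm on $Z_\th$, and strong $L^3$ convergence transfers it to $v^k$ for large $k$. Iterating at all centers in $Z_{\si'}$, $\si<\si'<1$, yields a Morrey--Campanato bound proportional to $\e$, from which $\|u\|_{L^\I(Z_{\si r})}\le C\e r^{-1}$ and the higher-derivative bounds $C_k\e r^{-k-1}$ follow by parabolic regularity, treating $u\cdot\nb u=O(\e^2)$ perturbatively (this is also what produces the linear dependence on $\e$, which your sketch does not address). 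One further correction: inside this lemma the pressure is only known on $Z_1$ as an $L^{3/2}$ function with $-\Delta p=\pd_i\pd_j(u_iu_j)$, so the decomposition needed in the iteration is a local Calder\'on--Zygmund part plus a part harmonic in the ball (controlled via the mean value property), not the global near/far Riesz-transform split of Lemma~\ref{lemma.pressure}, which presupposes knowledge of $u$ on all of $\R^3$.
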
 \par The usual version is $\si=1/2$. The general version $\si \in (0,1)$ follows from the usual version and a partition argument. \par \par \par \begin{proof}[Proof of Theorem~\ref{thrm.er}]   Choose $N_1$ such that  $\| u_0\|^2_{M^{2,1}_{\mathcal C_n}} <1$ for all $n\geq N_1$.  Let    \begin{equation}    Q= Q_{n-1} = (-2^n,2^n)^3  .     \notag   \end{equation} Note that $Q\in \mathcal C_n$ by \eqref{EQL5TS2kIQjZKb9QUx2Ui5Aflw1SLDGIuWUdCPjywVVM2ct8cmgOBS7dQViXR8Fbta1mtEFjTO0kowcK2d6MZiW8PrKPI1sXWJNBcREVY4H5QQGHbplPbwdTxpOI5OQZAKyiix7QeyYI91Ea16rKXKL2ifQXQPdPNL611}. By Lemmas~\ref{lemma.pressure} and removing the log factor due to $|Q|^{1/3}= 2^{n+1}$,    \EQ{   \label{EQL5TS2kIQjZKb9QUx2Ui5Aflw1SLDGIuWUdCPjywVVM2ct8cmgOBS7dQViXR8Fbta1mtEFjTO0kowcK2d6MZiW8PrKPI1sXWJNBcREVY4H5QQGHbplPbwdTxpOI5OQZAKyiix7QeyYI91Ea16rKXKL2ifQXQPdPNL613}       J&=   \frac{1}{|Q|^{2/3}}    \int_0^{t} \int_{Q} (|u|^3+|p- p_{Q}(s)|^{3/2}  )\,dx\,ds    \\&    \leq       \sup_{Q'\in \mathcal C_n ; Q'\cap Q^{**}\neq \emptyset} \frac C {|Q'|^{2/3}} \int_0^t \int_{Q'} |u|^3\,dx\,ds      \\& \indeq\indeq     + C |Q|^{\frac q2-\frac76}\int_0^t  \bigg(\sup_{Q'\in \mathcal C_n}\frac 1 {|Q'|^{q/3}} \int_{Q'} |u(x,s)|^2\,dx    \bigg)^{3/2} \,ds .     } Using Lemma~\ref{lemma.cubic} with $\varepsilon = 1$, \EQN{    J &    \leq     \sup_{Q'\in \mathcal C_n ; Q'\cap Q^{**}\neq \emptyset}   |Q|^{q-5/3} \int_0^t \bigg( \frac 1 {|Q|^{q/3}} \int_{Q}|u|^2 \,dx \bigg)^{3} \,ds      \\&\indeq\indeq        + |Q|^{-1/3}\sup_{Q'\in \mathcal C_n ; Q'\cap Q^{**}\neq \emptyset}            \int_0^t  \int_{Q}|\nb u|^2 \,dx\,ds    \\ &\indeq \indeq       + C\sup_{Q'\in \mathcal C_n ; Q'\cap Q^{**}\neq \emptyset}            {|Q|^{\frac q2-\frac76}} \int_0^t \bigg( \frac 1 {|Q|^{q/3}} \int_Q |u|^2\,dx \bigg)^{3/2}\,ds     \\&\indeq\indeq     + C |Q|^{\frac q2-\frac76}      \int_0^t           \bigg(\sup_{Q'\in \mathcal C_n}\frac 1 {|Q'|^{q/3}} \int_{Q'} |u(x,s)|^2\,dx    \bigg)^{3/2} \,ds. } Since the first \labell{7jps7rcdACYZMsBKANxtkqfNhktsbBf2OBNZ5pfoqSXtd3cHFLNtLgRoHrnNlwRnylZNWVNfHvOB1nUAyjtxTWW4oCqPRtuVuanMkLvqbxpNi0xYnOkcdFBdrw1Nu7cKybLjCF7P4dxj0Sbz9faVCWkVFos9t2aQ} three terms are the supremum over a finite number (independent of $n$) of cubes, we may move the supremum inside the integrals. Taking $q=1$, we get \EQN{     J&    \leq   \frac{C}{|Q|^{2/3}}             \int_0^t              \bigg(  \sup_{Q'\in \mathcal C_n  }           \frac 1 {|Q'|^{1/3}} \int_{Q'}|u|^2 \,dx          \bigg)^{3} \,ds          +   \frac{C}{|Q|^{2/3}}   \int_0^t \bigg( \sup_{Q'\in \mathcal C_n  }           \frac 1 {|Q'|^{1/3}} \int_{Q'} |u|^2\,dx \bigg)^{3/2}\,ds       \\ &\indeq\indeq           +  \frac{C}{|Q|^{1/3}}\sup_{Q'\in \mathcal C_n  }           \int_0^t  \int_{Q'}|\nb u|^2 \,dx\,ds. } Then, using  Remark \ref{remark.1},  $\| u_0\|^2_{M^{2,1}_{\mathcal C_n} }\leq 1$, and taking $t=\bar T_n = c_*2^{2n} $ where $c_*\in (0,1]$ is the suppressed constant in~\eqref{EQL5TS2kIQjZKb9QUx2Ui5Aflw1SLDGIuWUdCPjywVVM2ct8cmgOBS7dQViXR8Fbta1mtEFjTO0kowcK2d6MZiW8PrKPI1sXWJNBcREVY4H5QQGHbplPbwdTxpOI5OQZAKyiix7QeyYI91Ea16rKXKL2ifQXQPdPNL606}, we obtain \EQ{\label{Eq5.2} \frac 1 {|Q|^{2/3}} \int_0^{\bar T_n} \int_{Q} (|u|^3{+} |p- p_{Q}(s)|^{3/2}  )\,dx\,ds \leq  C \| u_0\|^2_{M^{2,1}_{\mathcal C_n} } } for  $n\geq N_1$. \par We now prove Theorem ~\ref{thrm.er}. We may assume $0<\delta<1$. Define $\si\in (0,1)$ by \EQ{\label{Eq5.3} \si^2 = \left(1+\frac{\delta}{4}\right)^{-1}\in \left(\frac45,1\right). } By $c_*\leq 1$, we have \[ B_{\sqrt{c_*}2^n}(0)\times [0,c_* 2^{2n}] \subset Q_{n-1}\times [0,c_*2^{2n}], \] for large $n$.  Since $\| u_0\|^2_{M^{2,1}_{\mathcal C_n}} \to 0$, by \eqref{Eq5.2}, there exists $N_2 \geq N_1$ so that if $n\geq N_2$, then \[ \frac 1 {c_*2^{2n}} \int_0^{ c_*2^{2n}} \int_{B_{\sqrt{c_*}2^n }(0)} (|u|^3 {+} |p- p_Q|^{3/2}  )\,dx\,dt  < \frac{ \e_*(\si)}{4}. \] By Lemma~\ref{thrm.epsilonreg}, $u$ is regular in  \[ Z_n = B_{\si \sqrt{c_*}2^n }(0) \times [(1-\si^2) c_*2^{2n} , c_*2^{2n}] \] and $\norm{u}_{L^\infty(Z_n)} \le C 2^{-n}$.  Note that $Z_n$ contains, using \eqref{Eq5.3}, \[ P_n = \bket{(x,t) \in \R^{3+1}_+ : \delta|x|^2 \le t, \quad (1-\si^2) c_*2^{2n}  \le t\le  4(1-\si^2) c_*2^{2n}}. \] Hence, $u$ is regular in \[ \bigcup _{n \ge N_2} P_n= \bket{(x,t) \in \R^{3+1}_+ : \max\{\tau, \delta|x|^2\}\le t}    \comma  \tau =  (1-\si^2) c_*2^{2N_2}. \] This proves Theorem~\ref{thrm.er}.  \end{proof} \par \par \par \begin{remark} In order to apply Lemma~\ref{thrm.epsilonreg}, we hope to bound both terms on the right side of \eqref{EQL5TS2kIQjZKb9QUx2Ui5Aflw1SLDGIuWUdCPjywVVM2ct8cmgOBS7dQViXR8Fbta1mtEFjTO0kowcK2d6MZiW8PrKPI1sXWJNBcREVY4H5QQGHbplPbwdTxpOI5OQZAKyiix7QeyYI91Ea16rKXKL2ifQXQPdPNL613} by $\norm{u_0}_{M^{2,q}_{\mathcal C_n}}^2$. Anticipating $t \sim |Q|^{2/3}$, we need for the second term \[ \frac q2-\frac76 + \frac 23 \le 0,  \] i.e., $ q\le 1$. Thus $q=1$ is the largest $q$ we may choose to obtain the eventual regularity with our method. \end{remark} \par \par \section*{Acknowledgments}  ZB was supported in part by the Simons Foundation, IK was supported in part by the NSF grant DMS-1907992, while TT was supported in part by NSERC grant 261356-18.   \par \par \par \par \par  
\begin{thebibliography}{[28]} \small \bibitem{Basson} A.~Basson, \emph{Solutions spatialement homog\'enes adapt\'ees au sens de Caffarelli, Kohn et Nirenberg  des \'equations de Navier-Stokes, Th\`ese}, Universit\'e d'\'Evry, 2006. \bibitem{BK1} Z.~Bradshaw and I.~Kukavica,    \emph{Existence of suitable weak solutions to the Navier-Stokes equations for intermittent data},   J.~Math.\ Fluid Mech.~(to appear). \bibitem{BT1}    Z.~Bradshaw and T.-P.~Tsai,    \emph{Forward discretely self-similar solutions of the {N}avier-{S}tokes equations {II}},    Ann.\ Henri Poincar\'{e}~\textbf{18}~(2017), no.~3, 1095--1119. \bibitem{BT5}   Z.~Bradshaw and T.-P.~Tsai,     \emph{Discretely self-similar solutions to the Navier-Stokes equations with data in $L^2_{\loc}$ satisfying the local energy inequality},    Analysis \& PDE (to appear). \bibitem{BT8} Z.~Bradshaw and T.-P.~Tsai,     \emph{Global existence, regularity, and uniqueness of infinite energy solutions to the Navier-Stokes equations},    arXiv:1907.00256. \bibitem{CF}    P.~Constantin and C.~Foias, \emph{Navier-Stokes equations},   Chicago Lectures in Mathematics, University of Chicago Press, Chicago, IL, 1988. \bibitem{CKN}    L.~Caffarelli, R.~Kohn, and L.~Nirenberg,   \emph{Partial regularity of suitable weak solutions of the Navier-Stokes equations},   Comm.\ Pure Appl.\ Math.~\textbf{35} (1982), no.~6, 771--831.  \bibitem{CW}  D.~Chae and J.~Wolf,   \emph{Existence of discretely self-similar solutions to the {N}avier-{S}tokes equations for initial value in {$L_{loc}^2(\Bbb R^3)$}},   Ann.\ Inst.\ H.~Poincar\'{e} Anal.\ Non Lin\'{e}aire~\textbf{35} (2018), no.~4, 1019--1039. \bibitem{DaGr} R.~Dascaliuc and Z.~Gruji\'c,  \emph{Energy cascades and flux locality in physical scales of the 3D NSE}, Comm. Math. Phys. \textbf{305} (2011), 199--220. \bibitem{FL}    P.G.~Fern\'andez-Dalgo and P.G.~Lemari\'e-Rieusset,    \emph{Weak solutions for Navier-Stokes equations with initial data in weighted $L^2$ spaces},    arxiv:1906.11038. \bibitem{G}    G.P.~Galdi, \emph{An introduction to the {N}avier-{S}tokes initial-boundary value problem},   Fundamental directions in mathematical fluid mechanics, Adv.\ Math.\ Fluid Mech., Birkh\"auser, Basel, 2000, pp.~1--70. \bibitem{grujic} Z. Gruji\'c, Regularity of forward-in-time self-similar solutions to the 3D NSE, Discrete Contin. Dyn. Syst. 14 (2006), 837-843. \bibitem{JiaSverak}  H.~Jia and V.~\v{S}ver\'{a}k,   \emph{Local-in-space estimates near initial time for weak solutions of the {N}avier-{S}tokes equations and forward self-similar solutions},   Invent.\ Math.~\textbf{196} (2014), no.~1, 233--265.  \bibitem{JiaSverak-minimal}  H.~Jia and V.~\v{S}ver\'{a}k, \emph{Minimal {$L^3$}-initial data for potential {N}avier-{S}tokes singularities},   SIAM J.~Math.\ Anal.~\textbf{45} (2013), no.~3, 1448--1459.  \bibitem{KaMiTs} K.~Kang, H.~Miura, and T.-P.~Tsai,    \emph{Short time regularity of Navier-Stokes flows with locally $L^3$ initial data and applications}, arXiv:1812.10509. \bibitem{KiSe}    N.~Kikuchi and G.~Seregin,   \emph{Weak solutions to the {C}auchy problem for the {N}avier-{S}tokes equations satisfying the local energy inequality},   Nonlinear equations and spectral theory, Amer.\ Math.\ Soc.\ Transl.\ Ser.~2, vol.~220, Amer.\ Math.\ Soc., Providence, RI, 2007, pp.~141--164.  \bibitem{Ku}  I.~Kukavica, \emph{On partial regularity for the {N}avier-{S}tokes   equations}, Discrete Contin.\ Dyn.\ Syst.~\textbf{21} (2008), no.~3, 717--728. \bibitem{KV}  I.~Kukavica and V.~Vicol,   \emph{On local uniqueness of weak solutions to the {N}avier-{S}tokes system with {${\rm BMO}^{-1}$} initial datum},   J.~Dynam.\ Differential Equations~\textbf{20} (2008), no.~3, 719--732. \bibitem{KwTs}   H.~Kwon and T.-P.~Tsai,    \emph{Global Navier-Stokes flows for non-decaying initial data with slowly decaying oscillation},  arxiv:1811.03249. \bibitem{LS99}   O.A.~Ladyzhenskaya and G.A.~Seregin,   \emph{On partial regularity of suitable weak solutions to the three-dimensional {N}avier-{S}tokes equations},   J.~Math.\ Fluid Mech.~\textbf{1} (1999), no.~4, 356--387. \bibitem{LR} P.~G.~Lemari{\'e}-Rieusset,   \emph{Recent developments in the Navier-Stokes problem},   Chapman \& Hall/CRC Research Notes in Mathematics, vol.~431, Chapman \& Hall/CRC, Boca Raton, FL, 2002.  \bibitem{LR-Morrey}  P.G.~Lemari\'{e}-Rieusset,   \emph{The {N}avier-{S}tokes equations in the critical {M}orrey-{C}ampanato space},   Rev.\ Mat.\ Iberoam.~\textbf{23} (2007), no.~3, 897--930. \bibitem{LR2}  P.~G.~Lemari\'e-Rieusset,    \emph{The {N}avier-{S}tokes problem in the   21st century}, CRC Press, Boca Raton, FL, 2016.  \bibitem{leray}   J.~Leray, \emph{Sur le mouvement d'un liquide visqueux emplissant l'espace},   Acta Math.~\textbf{63} (1934), no.~1, 193--248.  \bibitem{L98}    F.~Lin,   \emph{A new proof of the {C}affarelli-{K}ohn-{N}irenberg theorem},   Comm.\ Pure Appl.\ Math.~\textbf{51} (1998), no.~3, 241--257. \bibitem{MaMiPr}  Y.~Maekawa, H.~Miura, and C.~Prange,    \emph{Local energy weak solutions for the {N}avier-{S}tokes equations in the half-space},    Comm.\ Math.\ Phys.~\textbf{367} (2019), no.~2, 517--580.  \bibitem{RRS}  J.C.~Robinson, J.L.~Rodrigo, and W.~Sadowski,    \emph{The three-dimensional {N}avier-{S}tokes equations},    Cambridge Studies in Advanced Mathematics, vol.~157, Cambridge University Press, Cambridge, 2016, Classical theory.  \bibitem{RS}  W.~Rusin and V.~\v{S}ver\'{a}k, \emph{Minimal initial data for potential   {N}avier-{S}tokes singularities}, J.~Funct.\ Anal.~\textbf{260} (2011), no.~3,   879--891.  \bibitem{T}  R.~Temam,  \emph{Navier-Stokes equations},  AMS Chelsea Publishing, Providence, RI, 2001, Theory and numerical analysis, Reprint of the 1984 edition.  \bibitem{Tsai}    T.-P.~Tsai,   \emph{Lectures on {N}avier-{S}tokes equations}, Graduate Studies in Mathematics, vol. 192, American Mathematical Society, Providence, RI, 2018.  \bibitem{Tsutsui}    Y.~Tsutsui,   \emph{The {N}avier-{S}tokes equations and weak {H}erz spaces},    Adv.\ Differential Equations~\textbf{16} (2011), no.~11-12, 1049--1085. \end{thebibliography}
\end{document}